\newcommand{\Rea}{\mathbb{R}}
\newcommand{\Nat}{\mathbb{N}}
\newcommand{\var}{\operatorname{Var}}
\newcommand{\ud}{\mathrm{d}}
\newcommand{\norm}[1]{\| #1 \|}
\newtheorem{theorem}{Theorem}
\newtheorem{lemma}[theorem]{Lemma}
\newtheorem{proposition}[theorem]{Proposition}
\newtheorem{conjecture}[theorem]{Conjecture}
\theoremstyle{definition}
\newtheorem{definition}[theorem]{Definition}
\begin{document}

\title{{\bf Randomized longest-queue-first scheduling \\ for large-scale buffered systems}}
\date{\today}
\author{A. B. Dieker \and T. Suk}

\maketitle

\begin{abstract}
We develop diffusion approximations for parallel-queueing systems with the randomized longest-queue-first scheduling algorithm by establishing new mean-field limit theorems as the number of buffers $n\to\infty$.
We achieve this by allowing the number of sampled buffers $d=d(n)$ to depend on the number of buffers $n$, which yields an asymptotic `decoupling' of the queue length processes.

We show through simulation experiments that the resulting approximation is accurate even for moderate values of $n$ and $d(n)$.   To our knowledge, we are the first to derive diffusion approximations for a queueing system in the large-buffer mean-field regime.
Another noteworthy feature of our scaling idea is that the randomized longest-queue-first algorithm emulates the longest-queue-first algorithm, yet is computationally more attractive.
The analysis of the system performance as a function of $d(n)$ is facilitated by the multi-scale nature in our limit theorems: the various processes we study have different space scalings. This allows us to show the trade-off between performance and complexity of the randomized longest-queue-first scheduling algorithm.
\end{abstract}

\section{Introduction}
\label{sec:introduction}

Resource pooling is becoming increasingly common in modern applications of stochastic systems, such as in computer systems, wireless networks,
workforce management, call centers, and health care delivery.
At the same time, these applications give rise to systems which continue to grow in size.
For instance, a traditional web server farm only has a few servers, while cloud data centers have thousands of processors.
These two trends pose significant practical restrictions on admission, routing, and scheduling decision rules or algorithms.
Scalability and computability are becoming ever more important characteristics of decision rules, and consequently
simple decision rules with good performance are of particular interest. An example is the so-called least connection rule 
implemented in many load balancers in computer clouds, which assigns a task to the server with the least number of active connections;
cf.~the join-the-shortest-queue routing policy.
From a design point of view, the search for desirable algorithmic features often presents trade-offs between system performance, information/communication, and required computational effort.

Over the past decades, mean field models have become mainstream aids in the design and performance assessment of 
large-scale stochastic systems, see for instance \cite{Bakhshi:2009ud, Benaim:2008kv,GastN:2010,LeBoudec:2007ux,BVanHoudt:2013}.
These models allow for summary system dynamics to be approximated using a mean-field scaling,
which leads to deterministic `fluid' approximations.
Although these approximations are designed for large systems,
they typically do not work well unless the scaling parameter $n$ is excessively large.

In the view of this, it is of interest to find more refined approximations than fluid approximations. In this paper, we derive diffusion approximations in a specific instance of a large-scale stochastic system:
a queueing system with many buffers with a randomized longest-queue-first scheduling algorithm.
Under this scheduling algorithm, the server works on a task from the buffer with the 
longest queue length among several sampled buffers; it approximates the longest-queue-first scheduling policy,
but it is computationally more attractive if the number of buffers is large.

\paragraph{Our model.}
In our model, each buffer is fed with an independent stream of tasks, which arrive according to a Poisson process.
All $n$ buffers are connected to a single centralized server.
Under the randomized longest-queue-first policy, this server selects $d(n)$ 
buffers uniformly at random (with replacement) and processes a task from the 
longest queue among the selected buffers; it idles for a random amount of time if all buffers in the sample are empty. 
Tasks have random processing time requirements.
The total processing capacity scales linearly with $n$ and the processing time distribution is independent of $n$.
We work in an underloaded regime,
with enough processing capacity to eventually serve all arriving tasks.
Note that this scheduling algorithm is agnostic in the sense that it does not use arrival rates. 
By establishing limit theorems, we develop approximations for the queue length processes in the system,
and show that the approximations are accurate even for moderate $n$ and $d(n)$. 
Also, we study the trade-off between performance and complexity of the algorithm.

\paragraph{Related works.}
Most existing work on the mean-field large-buffer asymptotic regime for queueing systems concentrates on the so-called supermarket model, 
which has received much attention over the past decades following the work of Vvedenskaya {\em et al.}~\cite{Vvedenskaya:1996us}; see also \cite{Mitzenmacher:1996vh} and follow-up work.
The focus of this line of work lies on the question how 
incoming tasks should be routed to buffers, i.e., the load balancing problem.
For the randomized join-the-shortest-queue routing policy
where tasks are routed to the buffer with the shortest queue length among $d$ uniformly selected buffers,
this line of work has exposed a dramatic improvement in performance when $d=2$ versus $d=1$.
This phenomenon is known as the {\em power of two choices}.
A recently proposed different approach for the load balancing problem is inspired by 
the cavity method \cite{Bramson:2010vw,Bramson:2011ut,Bramson:2011tk}. 
This approach is a significant advance in the state-of-the-art since it 
does not require exponentially distributed service times.
However, applying this methodology to our setting presents significant challenges due to 
the scaling employed here.
We do not consider this method here, it remains an open problem whether the cavity
method can be applied to our setting.

The papers by Alanyali and Dashouk \cite{Alanyali:2008vf} and Tsitsiklis and Xu \cite{Tsitsiklis:wz}
are closely related to the present paper. Both consider scheduling in the presence of a large number of buffers. 
The paper \cite{Alanyali:2008vf} studies the randomized longest-queue-first policy with $d(n)=d$, 
and the main finding is that the empirical distribution of the queue lengths in the buffer is asymptotically geometric
with parameter depending on $d$. 
It establishes an upper bound on the asymptotic order, but here we establish tightness and identify the limit. A certain time scaling that is not present in \cite{Alanyali:2008vf} is essential for the validity of our limit theorems.
The paper \cite{Tsitsiklis:wz} analyzes
a hybrid system with centralized and distributed processing capacity in a setting similar to ours.
Their work exposes a dramatic improvement in performance in the presence of centralization
compared to a fully distributed system.

\paragraph{Our contributions.}
We establish a diffusion limit theory for a queueing system in the large-buffer mean-field regime. Diffusion approximations are well-known to arise in the context of mean-field models (e.g., \cite{kurtz:1978kq})
but off-the-shelf results typically cannot directly be applied due to intricate dependencies or technical intricacies.
Thus, by and large, second-order diffusion approximations have been uncharted territory for many large-scale queueing systems. 

Our analysis is facilitated by the idea to scale the number of sampled buffers $d(n)$ with the number of buffers $n$, 
which asymptotically `decouples' the buffers and consequently removes certain dependencies among the buffer contents.
The decoupling manifests itself through a limit theorem on multiple scales, where the various queue-length processes we study
have different space scalings. 
We show empirically that this result leads to accurate approximations even when the 
number of buffers $n$ is small, i.e., outside of the asymptotic regime that motivated the approximation.

For our system, since the scheduling algorithm depends on $n$, several standard arguments 
for large-scale systems break down due to the multi-scale nature of the various stochastic processes involved;
thus, our work requires several technical novelties.
Among these is an induction-based argument for establishing the existence of a fluid model.
We also rely on an appropriate time scaling, 
which is specific to our case and has not been employed in other work.

Our fluid limit theory makes explicit the trade-off between performance and complexity for our algorithm.
Intuitively, one expects better system performance for larger $d(n)$, since the likelihood of idling decreases;
however, the computational effort also increases since one must sample (and compare) the queue length of more buffers.
Our main insight into the interplay between performance (i.e., low queue lengths) and computational complexity of the scheduling algorithm within our model can be summarized as follows.
We study the fraction of queues with at least $k$ tasks, and show that it is of order $1/d(n)^k$
under the randomized longest-queue-first scheduling policy. 
This strengthens and generalizes the upper bound from \cite{Alanyali:2008vf}.
Thus, the average queue length is of order $1/d(n)$ as 
$n$ approaches infinity. This should be contrasted with $d(n)$, which is 
the order of the computational complexity of the scheduling algorithm. 

The randomized longest-queue-first algorithm approximates the longest-queue-first algorithm, which is a fully centralized policy, 
so it is appropriate to make a comparison with the partially centralized scheduling algorithm from \cite{Tsitsiklis:wz}, where all $n$ buffers are used with probability $p>0$ (and one buffer is chosen uniformly at random otherwise).
Our algorithm has better performance 
although it compares only $d(n)\ll n$ buffers per job as opposed to $p n+1-p$, which is the average number of buffers used in the partially centralized algorithm.  

\paragraph{Outline of this paper.}
We introduce our model in Section~\ref{sec:model_and_notation}. Our main results come in two pieces: limit theorems (Section~\ref{sec:main_results}) and approximations with validation (Section~\ref{sec:numerics}). Sections~\ref{sec:proofs} contains the proofs of our limit theorems.
Finally, Appendix~\ref{sec:appendix} has several standard results that we have included for quick reference.

\section{Model and notation} 
\label{sec:model_and_notation}
The systems we are interested in consist of many parallel queues and a single server. 
Consider a system with $n$ buffers, which temporarily store tasks to be served by the (central) server.
The number of tasks in a buffer is called its queue length.
Buffers temporarily hold tasks in anticipation of processing, 
and tasks arrive according to independent Poisson processes with rate $\lambda<1$.
The processing times of the tasks are i.i.d.~with an exponential distribution with unit mean.
All processing times are independent of the arrival processes.
The server serves tasks at rate $n$.

The server schedules tasks as follows. It selects $d(n)$ buffers uniformly at random (with replacement) and processes 
a task in the buffer with the longest queue length among the selected buffers.
Ties are broken by selecting a buffer uniformly at random among those with the longest queue length.
If all selected buffers are empty, then the service opportunity is wasted and 
the server waits for an exponentially distributed amount of time with parameter $n$ before resampling.
Once a task has been processed, it immediately leaves the system.
We do not consider scheduling within buffers, since we only study queue lengths.
Throughout, 
we are interested in the case when $d(n)$ satisfies $d(n)=o(n)$ and $\lim_{n\to\infty} d(n)=\infty$.

In this model description, it is not essential that there is exactly one server. Indeed,
the same dynamics arise if an arbitrary number $M$ of servers process tasks at rate $n/M$, 
as long as each server uses the randomized longest-queue-first policy. 
This model arises in the content of cellular data communications \cite{Alanyali:2008vf}.
An abstract representation of the model is displayed in Figure~\ref{fig:diagram}.

\begin{figure}[htbp]
\centering
\begin{subfigure}[b]{0.4\textwidth}
\includegraphics[width=\textwidth]{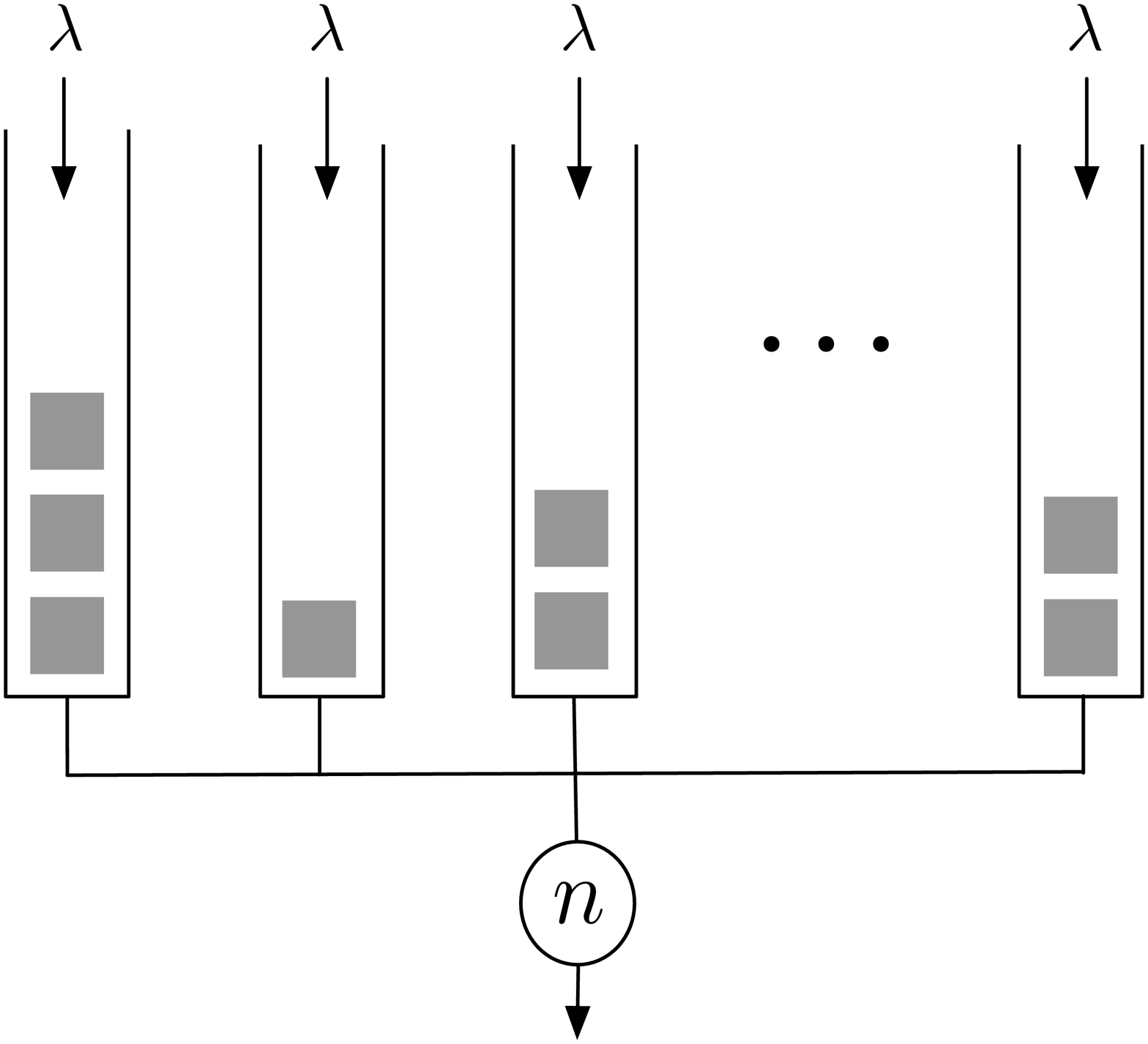}
\end{subfigure}
~
\centering
 \begin{subfigure}[b]{0.4\textwidth}
\includegraphics[width=\textwidth]{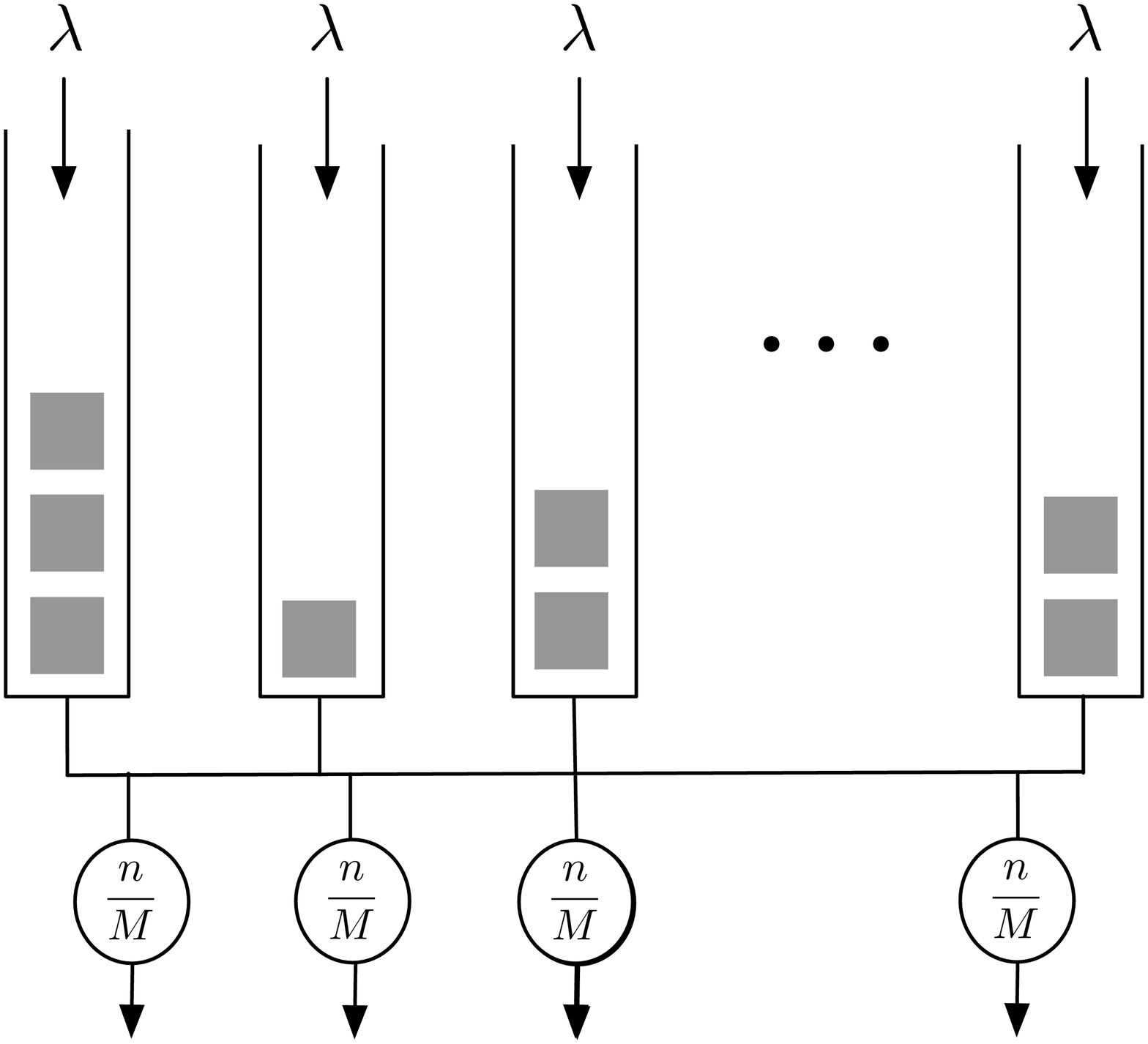}
\end{subfigure}
\caption{Our models with $n$ buffers. Left: One central server with service rate $n$. Right: $M$ servers with service rates $n/M$.}
\label{fig:diagram}
\end{figure}

Let $F_{n,k}(t)$ be the fraction of buffers with queue length greater than or equal to $k$ at time $t$ in the system with $n$ buffers, so that
$\left\{F_{n,k}(t)\right\}_{k\in\mathbb N}$ is a Markov process.
Such mean-field quantities have been used in analyzing various scheduling and load balancing 
policies, e.g., \cite{Alanyali:2008vf,Mitzenmacher:1996vh,Tsitsiklis:wz}. 
However, under the randomized longest-queue-first policy, we can expect
from \cite{Alanyali:2008vf} that, whenever $\lim_{n\to\infty}d(n)=\infty$,
\begin{equation*}
	\lim_{t\to\infty}\lim_{n\to\infty} F_{n,k}(t)=0
\end{equation*}
for all $k\geq 1,$ i.e., in this sense the performance is asymptotically the same as that of the longest-queue-first policy, and these random variables are asymptotically degenerate.

\section{Limit theorems} 
\label{sec:main_results}

In this section, we present limit theorems which are stated in terms of $F_{n,k}(\,\cdot\,)$ under appropriate scaling.
Let $K\in\Nat$ be a fixed finite integer satisfying $\lim_{n\to\infty}n/d(n)^K=\infty$. 
Let $U_{n,k}(\,\cdot\,)$ be the following modification of $F_{n,k}(\,\cdot\,):$
\begin{equation*}
	U_{n,k}(t)~:=~d(n)^k\,F_{n,k}\!\left(\frac{t}{d(n)}\right),
\end{equation*}
for $k=0,1,\dots,K.$
Our first limit theorem is that 
$\left\{ (U_{n,1}(t),\dots,U_{n,K}(t))\right\}_{n\in\mathbb N}$
has a fluid limit as $n\to\infty$ and that this fluid limit satisfies the system of 
differential equations described in the following definition.

\begin{definition}\label{def:fluidSystem}
	For $v_1,\dots,v_K\in\Rea_+,$ $\left(u_1(t),\dots,u_K(t)\right)$ is said to be a \emph{longest-queue-first fluid limit system} with initial condition $(v_1,\dots,v_K)$ if:
	\begin{enumerate}
		\item[(1)] $u_k:[0,\infty)\to\Rea_+$ with $u_k(0)=v_k$~for all~$k=1,\dots,K.$
		\item[(2)] $u_1'(t)=e^{-u_1(t)}-1+\lambda.$
		\item[(3)] $u_{k}'(t)=\lambda\,u_{k-1}(t)-u_k(t),$ for all $k=2,\dots,K.$
	\end{enumerate}
\end{definition}

By the usual existence and the uniqueness theorem of first order ordinary differential equations (e.g., \cite{Braun:1992wv}), 
there is a unique differentiable function $u_1:[0,\infty)\to\Rea_+$ with $u_1(0)=v_1$ satisfying the second condition in Definition \ref{def:fluidSystem}.
For $k\geq 2,$ when $u_{k-1}(t)$ and $v_k$ are given, the differential equation of $u_k$ is linear with inhomogeneous part $u_{k-1}(t)$, and 
therefore $u_k:[0,\infty)\to\Rea_+$ is unique. Thus, by induction, for any given initial condition, there is a unique longest-queue-first 
fluid limit system. 

We remark that the following is an explicit expression of the solution if $v_1< \ln\! \left(\frac{1}{1-\lambda}\right)$ (the other case yields a similar expression):
\begin{eqnarray*}
  && u_1(t)~=~\ln\!\left(  \frac{C_1\,e^{(1-\lambda)t}-1}{C_1(1-\lambda)\,e^{(1-\lambda)t}} \right),\\
	&& u_k(t)~=~e^{-t}\,v_k + \lambda\!\int_0^t\!e^{-(t-s)}\,\,u_{k-1}(s)\,\ud s,\quad\quad k=2,\dots,K,
\end{eqnarray*}
where $C_1={1}/{(1-(1-\lambda)e^{v_1})}$.
Moreover, a longest-queue-first fluid limit system has a unique critical point which is stable: $\left(\ln\!\left( \frac{1}{1-\lambda}\right),\lambda\,\ln\!\left( \frac{1}{1-\lambda}\right),\dots, \lambda^{K-1}\,\ln\!\left( \frac{1}{1-\lambda}\right)\right).$ 
The following proposition summarizes these arguments.

\begin{proposition}\label{prop:fluidLimit}
	For any $(v_1,\dots,v_K)\in\Rea_+^K,$ there is a unique longest-queue-first fluid limit system $\left(u_1(t),\dots,u_K(t)\right)$ with $u_k(0)=v_k$ for all $k=1,\dots,K,$ and 
	\begin{equation*}
		\left(u_1(t),u_2(t),\dots,u_K(t)\right)~\to~\left(\ln\!\left( \frac{1}{1-\lambda}\right),\lambda\,\ln\!\left( \frac{1}{1-\lambda}\right),\dots, \lambda^{K-1}\,\ln\!\left( \frac{1}{1-\lambda}\right)\right)
	\end{equation*}
	as $t\to\infty$.
\end{proposition}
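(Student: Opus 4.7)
The plan is to treat the existence/uniqueness claim and the convergence claim separately, since the first is essentially already handled in the discussion preceding the proposition, and the second is the more substantive content.

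For existence and uniqueness, I would first note that the equation $u_1'=e^{-u_1}-1+\lambda$ is autonomous with a $C^1$ right-hand side, so Picard--Lindel\"of gives a unique local solution for any initial value $v_1\in\Rea_+$. To extend this solution globally and to keep it in $\Rea_+$, I would observe that the vector field at $u_1=0$ equals $\lambda>0$, so the solution cannot cross zero downward, and that for $u_1>\ln(1/(1-\lambda))$ the vector field is strictly negative, so the solution stays bounded above by $\max\{v_1,\ln(1/(1-\lambda))\}$. Boundedness together with Lipschitz continuity on any compact set gives global existence on $[0,\infty)$. For $k\geq 2$, once $u_{k-1}$ is fixed, the equation for $u_k$ is linear in $u_k$ with forcing $\lambda\,u_{k-1}(t)$, so variation of parameters yields the unique solution $u_k(t)=e^{-t}v_k+\lambda\int_0^t e^{-(t-s)}u_{k-1}(s)\,\ud s$, which is non-negative whenever $u_{k-1}$ is. Induction on $k$ then finishes this part.

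For convergence, I would first handle $u_1$. The map $u\mapsto e^{-u}-1+\lambda$ is strictly decreasing, vanishes precisely at $u_1^*:=\ln(1/(1-\lambda))$, is positive for $u<u_1^*$ and negative for $u>u_1^*$. Hence $u_1(t)-u_1^*$ is non-increasing in absolute value, $u_1$ is monotone (toward $u_1^*$), and bounded, so it converges to some limit $u_1^\infty$; since $u_1'\to 0$ forces $e^{-u_1^\infty}-1+\lambda=0$, we must have $u_1^\infty=u_1^*$. One can alternatively read this off the explicit formula given in the excerpt. Moreover, a quick linearization around $u_1^*$ shows the convergence is exponential at rate $1-\lambda$, which will be convenient for the induction.

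For $k\geq 2$, I would proceed by induction, assuming $u_{k-1}(t)\to u_{k-1}^*:=\lambda^{k-2}\ln(1/(1-\lambda))$ as $t\to\infty$. Using the variation-of-parameters formula, write
\begin{equation*}
u_k(t)-\lambda u_{k-1}^*~=~e^{-t}(v_k-\lambda u_{k-1}^*)+\lambda\int_0^t e^{-(t-s)}\bigl(u_{k-1}(s)-u_{k-1}^*\bigr)\,\ud s.
\end{equation*}
Given $\varepsilon>0$, pick $T$ so that $|u_{k-1}(s)-u_{k-1}^*|<\varepsilon$ for $s\geq T$; splitting the integral at $T$ and using $\int_T^t e^{-(t-s)}\ud s\leq 1$ together with the decay of $e^{-t}$ on $[0,T]$ shows $\limsup_{t\to\infty}|u_k(t)-\lambda u_{k-1}^*|\leq \lambda\varepsilon$, hence $u_k(t)\to \lambda u_{k-1}^*=\lambda^{k-1}\ln(1/(1-\lambda))=u_k^*$. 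The main obstacle is mild and lies in the $u_1$ step, where one must rule out monotonicity pathologies on the boundary $u_1=0$; everything else is a standard ODE/induction routine.
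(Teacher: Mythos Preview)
Your proposal is correct and follows essentially the same route as the paper for existence and uniqueness (Picard--Lindel\"of for $u_1$, then the linear variation-of-parameters formula and induction for $k\ge 2$). For the convergence part the paper is very terse: it simply writes down the explicit closed-form solution for $u_1$ (and the integral formula for $u_k$) and observes that the unique critical point is stable, leaving the verification implicit; you instead give a self-contained qualitative argument (sign of the vector field and monotonicity for $u_1$, an $\varepsilon$-splitting of the convolution integral for $u_k$). Both are standard and equivalent here; your version has the mild advantage of not requiring the explicit formula and of making the induction for $k\ge 2$ completely explicit, while the paper's version is shorter once one trusts the closed form.
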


Our first limit theorem states that, with an appropriate initial condition, $\left(U_{n,1}(t),\dots,U_{n,K}(t)\right)$ converges to a fluid limit system as $n\to\infty$.

\begin{theorem}[Fluid limit]\label{thm:mainFluid}
	Consider a sequence of systems indexed by $n$. Fix a number $K\in\Nat$ such that $\lim_{n\to\infty} n/d(n)^K=\infty$. Assume that $U_{n,k}(0)$ is deterministic for every $n$ and $k\le K$, and that 
	there exist $v_1,\dots,v_K\in\Rea_+$ such that
	\begin{equation*}
		\lim_{n\to\infty} U_{n,k}(0)=v_k,\quad\quad k=1,\dots,K,
	\end{equation*}
	and
	\begin{equation*}
		\lim_{n\to\infty} d(n)^K \left(F_{n,K+1}(0)+F_{n,K+2}(0)+\cdots\right)=0.
	\end{equation*}
	Then the sequence of stochastic processes 
  $\left\{ \left(U_{n,1}(t),\dots,U_{n,K}(t)\right)\right\}_{n\in\mathbb N}$ 
  converges 
almost surely to the longest-queue-first fluid limit system $\left(u_1(t),\dots,u_K(t)\right)$ with initial condition $(v_1,\dots,v_K)$, uniformly on compact sets.
\end{theorem}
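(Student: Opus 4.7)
Plan: The natural approach is the Doob--Meyer decomposition of the Markov process $(F_{n,k})_{k \geq 1}$, combined with Doob's inequality on the martingale parts and Gronwall's inequality applied to the resulting scaled integral equations. The subtlety specific to this multi-scale regime is that the drift of $U_{n,K}$ involves an off-scale tail quantity $d(n)^K F_{n,K+1}$ that is not directly tracked; a separate tail estimate is needed, and the circular dependence that it creates with $U_{n,K}$ itself is broken by a stopping-time bootstrap.

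First, I would write each $F_{n,k}$ as its initial value plus a compensator plus a martingale, using the jump rates ($+1/n$ at rate $n\lambda(F_{n,k-1}-F_{n,k})$, $-1/n$ at rate $n[(1-F_{n,k+1})^{d(n)}-(1-F_{n,k})^{d(n)}]$). After the time-change $t\mapsto t/d(n)$ and multiplication by $d(n)^k$, the result is
\[
U_{n,k}(t) \;=\; U_{n,k}(0) + \int_0^t d(n)^{k-1}\,G_k\!\bigl(\tilde F_n(s)\bigr)\,\ud s + \tilde M_{n,k}(t),
\]
where $G_k(F)=\lambda(F_{k-1}-F_k)-[(1-F_{k+1})^{d(n)}-(1-F_k)^{d(n)}]$, $\tilde F_n(s):=F_n(s/d(n))$, and $\tilde M_{n,k}$ is a martingale with $\pm d(n)^k/n$ jumps. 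A routine computation bounds $\langle\tilde M_{n,k}\rangle_T$ by $C_k T\,d(n)^k/n$, which vanishes thanks to $d(n)^K = o(n)$; constructing all processes on a common probability space via Poisson random measures (a Kurtz-style random time-change representation) and combining Doob's maximal inequality with Freedman's exponential tail bound then yields $\sup_{[0,T]}|\tilde M_{n,k}| \to 0$ almost surely.

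Next, I would analyze the drift. Taylor-expanding $(1-F_{n,k})^{d(n)}=\exp(d(n)\ln(1-F_{n,k}))$ and exploiting $d(n) F_{n,k}=O(1/d(n)^{k-1}) \to 0$ for $k\ge 2$, while handling $k=1$ directly via $(1-U_{n,1}/d(n))^{d(n)} \to e^{-u_1}$, gives
\[
d(n)^{k-1}\,G_k(\tilde F_n) \;=\; \lambda U_{n,k-1} - U_{n,k} + \frac{U_{n,k+1}}{d(n)} + o(1), \qquad k \geq 2,
\]
together with the analogous limit $g_1(u)=e^{-u_1}-1+\lambda$ at $k=1$. For $k<K$ the term $U_{n,k+1}/d(n)$ is $O(1/d(n))=o(1)$, but for $k=K$ it equals $d(n)^K F_{n,K+1}(t/d(n))$ and must be controlled separately. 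Setting $Y_n(t):=d(n)^K\sum_{j\ge K+1}F_{n,j}(t/d(n))\ge d(n)^K F_{n,K+1}(t/d(n))$, the unscaled drift of $\sum_{j\ge K+1}F_{n,j}$ equals $\lambda F_{n,K}-(1-(1-F_{n,K+1})^{d(n)})\le\lambda F_{n,K}$ (the subtracted term being nonnegative), so after rescaling and the same quadratic-variation estimate
\[
Y_n(t) \;\le\; Y_n(0) + \int_0^t \lambda\,\frac{U_{n,K}(s)}{d(n)}\,\ud s + \sup_{[0,T]}|M^Y_n|,
\]
and the hypothesis $Y_n(0)\to 0$ forces $Y_n \to 0$ uniformly on $[0,T]$ almost surely, \emph{provided} $U_{n,K}$ stays bounded there.

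To close the resulting circular dependence --- bounding $Y_n$ requires $U_{n,K}$ bounded, while quantifying the drift of $U_{n,K}$ requires $Y_n$ small --- I would introduce the stopping time $\tau_n:=\inf\{t\ge 0:|U_{n,k}(t)-u_k(t)|>1 \text{ for some } k\le K\}$. On $[0,\tau_n\wedge T]$ every $U_{n,k}$ is bounded by $\sup_{[0,T]}u_k+1$, which validates both the Taylor expansions and the $Y_n$ bound. The limiting drifts $g_1(u)=e^{-u_1}-1+\lambda$ and $g_k(u)=\lambda u_{k-1}-u_k$ are Lipschitz in the relevant coordinates, so applying Gronwall's inequality chained successively over $k=1,\ldots,K$ to the differences $U_{n,k}-u_k$ yields $\sup_{[0,\tau_n\wedge T]}|U_{n,k}-u_k|\to 0$ almost surely; consequently $\tau_n>T$ eventually, and uniqueness of the fluid limit (Proposition~\ref{prop:fluidLimit}) finishes the proof. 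The main obstacle is exactly this bootstrap together with the tail control: each $U_{n,k}$ lives on its own spatial scale $d(n)^k$, while the drift of the topmost tracked coordinate depends on mass at an unresolved finer scale, so off-the-shelf Kurtz-type theorems do not directly apply --- it is the combination of the stopping-time argument with the strictly nonpositive contribution of the tail drift that makes the estimates close.
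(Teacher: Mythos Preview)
Your approach is correct and takes a genuinely different route from the paper's. The paper works with the cumulative tail variables $V_{n,k}(t)=\sum_{j\ge k}F_{n,j}(t)$ and argues by \emph{tightness plus identification}: it shows that $d(n)^{k+1}V_{n,k+1}(t/d(n))$ is asymptotically Lipschitz (via an Arzel\`a--Ascoli-type criterion, Lemma~\ref{lemma:kuang}), extracts convergent subsequences, and identifies every subsequential limit through the integral equation coming from the Poisson representation~\eqref{eq:exact}; uniqueness of the limiting ODE then forces all subsequential limits to agree. The induction runs from $k$ to $k+1$, and the circularity you flag (the departure term of $V_{n,k+1}$ involves $U_{n,k+1}$ itself) is broken differently: the paper first obtains a one-sided upper bound on $U_{n,k+1}$ using only the arrival part $I_n$, which depends on the already-controlled $U_{n,k}$, and this suffices to get the asymptotic Lipschitz property of the departure part $D_n$.

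Your route---compensator plus martingale, stopping-time localization, and a chained Gronwall---is more direct and in principle quantitative, whereas the paper's compactness argument is soft but avoids having to close any inequality loop. One technical caution: your quadratic-variation bound $\langle\tilde M_{n,k}\rangle_T\le C_kT\,d(n)^k/n$ already presupposes jump rates of order $n/d(n)^{k-1}$, so it must be stated for the stopped process; and Doob/Freedman plus Borel--Cantelli requires summability of $\exp(-c\,n/d(n)^k)$, which the bare hypothesis $n/d(n)^K\to\infty$ does not guarantee. The Kurtz-style Poisson time-change you mention (exactly the paper's~\eqref{eq:exact}) is the clean fix: the fluctuation term is then a scaled centered Poisson process evaluated at a random time, and the functional LLN for a single Poisson process delivers almost sure uniform convergence without any rate condition.
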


The proof of the above theorem is based on mathematical induction, and 
we give a high-level overview of this proof at the beginning of Section~\ref{sec:proofs}.

This result makes the explicit trade-off between performance and complexity for randomized longest-queue-first algorithms. 
Theorem~\ref{thm:mainFluid} shows that for $k=1,\dots,K$, as $n\to\infty$,
\begin{equation*}
	F_{n,k}\!\left(\frac{t}{d(n)}\right)=\Theta\!\left(\frac{1}{d(n)^k}\right).
\end{equation*}
For $k=1$, this agrees with the upper bound sketched in \cite{Alanyali:2008vf}. Then the average queue length is order of $\frac{1}{d(n)}$, inverse of the complexity. In the next section, we investigate this by simulation.

\begin{figure}[htbp]
\centering
\includegraphics[width=3in]{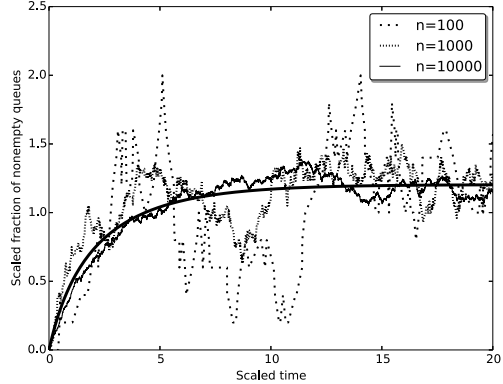}
\caption{Sample paths of $U_{n,1}(t)$ for various $n$, with $d(n)=10\!\cdot\! \log_{10}(n)$ and $\lambda=0.7$. The thick curve is the solution of $u^\prime(t)=e^{-u(t)}-1+\lambda$.}
\label{fig:sample_path}
\end{figure}

Figure~\ref{fig:sample_path} shows sample paths of $U_{n,1}(t)$ (the scaled fraction of nonempty queues) for various $n$ and it empirically confirms our first limit theorem. However, even for $n$ as large as $10000$, the sample paths fluctuate around the fluid limit, especially for large $t$. This means that it is important to incorporate a second-order approximation.

Our second limit theorem is about the diffusion limit of $U_{n,1}(t)$ as $n\to\infty$. Precisely, we show that the stochastic processes $U_{n,1}(t)$ converges in distribution to a diffusion process after appropriate scaling. 
We believe it is the first diffusion limit theorem for a queueing system in the large-buffer mean-field regime, and is based on an asymptotic `decoupling' of the queue length processes.
Note that $U_{n,1}(t)$ is not a Markov process, but the approximating process $Z(t)$ is a Markov process.
In the appendix, we explain the exact meaning of this type of convergence, for which we use the
symbol `$\Rightarrow$'.

\begin{theorem}[Diffusion limit]\label{thm:mainDiffusion}
  Consider a sequence of system indexed by $n$. Suppose that $\lim_{n\to\infty} n/d(n)=\infty$ and $\lim_{n\to\infty} n/d(n)^2=0$. Assume that $U_{n,1}(0)$ is deterministic for all $n$, and that
  there exists some $v_1\in\Rea_+$ such that
  \begin{equation}
    \lim_{n\to\infty} \sqrt{\frac{n}{d(n)}}\left( U_{n,1}(0)-v_1 \right)=0 , \label{eq:diffusion_condition_1}
  \end{equation}
  and
  \begin{equation}
    	\lim_{n\to\infty} \sqrt{n\,d(n)} \left(F_{n,2}(0)+F_{n,3}(0)+\cdots\right)=0.  \label{eq:diffusion_condition_2}
  \end{equation}
  Then we have, as $n\to\infty$,
  \begin{equation*}
    \sqrt{\frac{n}{d(n)}} \left( U_{n,1}(t)-u_1(t) \right) ~\Rightarrow~ Z(t),
  \end{equation*}
  where $Z(t)$ is the solution of the following Ito integral equation:
  \begin{equation*}
    Z(t)=\sqrt{\lambda}\,B^{(1)}(t)-\int_0^t\!\sqrt{1-e^{-u_1(s)}}\,\ud B^{(2)}(s) -\int_0^t e^{-u_1(s)}\,Z(s)\,\ud s 
  \end{equation*}
  for independent Wiener processes $B^{(1)}(t)$ and $B^{(2)}(t)$.
\end{theorem}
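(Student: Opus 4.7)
The plan is to obtain a semimartingale decomposition for $U_{n,1}(t)$ in the time-scaled chain, linearize its drift around the fluid limit $u_1$, and apply a martingale functional central limit theorem to the appropriately scaled martingale part; uniqueness of the linear Ito equation satisfied by $Z$ then identifies the diffusion limit.

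First I would write
\[
U_{n,1}(t) = U_{n,1}(0) + \int_0^t A_n(s)\,\ud s + M_n^{(+)}(t) - M_n^{(-)}(t),
\]
where, reading off jump rates in the $t/d(n)$-time-scaled chain (and writing $F_{n,1}=U_{n,1}/d(n)$),
\[
A_n(s) = \lambda(1-F_{n,1}(s)) + (1-F_{n,1}(s))^{d(n)} - \bigl(1-F_{n,2}(s/d(n))\bigr)^{d(n)},
\]
and $M_n^{(+)},M_n^{(-)}$ are orthogonal compensated jump martingales generated, respectively, by arrivals to empty queues and by service completions of length-one queues, each of jump size $d(n)/n$. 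Setting $\hat U_n(t):=\sqrt{n/d(n)}(U_{n,1}(t)-u_1(t))$ and using $u_1'=\lambda-1+e^{-u_1}$,
\[
\hat U_n(t) = \hat U_n(0) + \int_0^t \sqrt{n/d(n)}\bigl(A_n(s)-u_1'(s)\bigr)\,\ud s + \sqrt{n/d(n)}\bigl(M_n^{(+)}(t)-M_n^{(-)}(t)\bigr);
\]
hypothesis \eqref{eq:diffusion_condition_1} gives $\hat U_n(0)\to 0$.

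Next I would linearize the drift. Using $(1-x/d(n))^{d(n)}=e^{-x}(1+O(x^2/d(n)))$ for bounded $x$ and $e^{-U_{n,1}}-e^{-u_1}=-e^{-u_1}(U_{n,1}-u_1)+O((U_{n,1}-u_1)^2)$, I would obtain
\[
\sqrt{n/d(n)}\bigl(A_n(s)-u_1'(s)\bigr) = -e^{-u_1(s)}\,\hat U_n(s) + \varepsilon_n(s),
\]
where $\varepsilon_n$ collects four contributions: $\sqrt{n/d(n)}\cdot\lambda F_{n,1}=O(\sqrt{n/d(n)^3})$; a comparable $O(\sqrt{n/d(n)^3})$ Taylor error from the exponential approximation; a quadratic remainder of order $\hat U_n^2\sqrt{d(n)/n}$; and, critically, $\sqrt{n/d(n)}\bigl[1-(1-F_{n,2})^{d(n)}\bigr]$, whose leading behavior is $\sqrt{n/d(n)^3}\,U_{n,2}$. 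For the martingale part, the maximal jumps are $\sqrt{n/d(n)}\cdot d(n)/n=\sqrt{d(n)/n}\to 0$, and the predictable quadratic variations satisfy
\begin{align*}
\bigl\langle \sqrt{n/d(n)}\,M_n^{(+)}\bigr\rangle(t) &= \int_0^t \lambda(1-F_{n,1})\,\ud s \;\to\; \lambda t,\\
\bigl\langle \sqrt{n/d(n)}\,M_n^{(-)}\bigr\rangle(t) &= \int_0^t \bigl[(1-F_{n,2})^{d(n)} - (1-F_{n,1})^{d(n)}\bigr]\,\ud s \;\to\; \int_0^t(1-e^{-u_1(s)})\,\ud s.
\end{align*}
Since the driving arrival and service Poisson sources are independent, a standard martingale FCLT delivers joint convergence
\[
\sqrt{n/d(n)}\,(M_n^{(+)},M_n^{(-)}) \;\Rightarrow\; \Bigl(\sqrt{\lambda}\,B^{(1)},\;\textstyle\int_0^\cdot\sqrt{1-e^{-u_1(s)}}\,\ud B^{(2)}(s)\Bigr)
\]
with $B^{(1)},B^{(2)}$ independent Wiener processes. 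Assembling these ingredients, $\hat U_n$ is an asymptotic solution of the linear integral equation defining $Z$; that equation has a unique strong solution via the explicit integrating factor $\exp(-\int_0^t e^{-u_1(s)}\,\ud s)$, so a Gronwall estimate combined with the continuous mapping theorem yields $\hat U_n\Rightarrow Z$ on compact time intervals.

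The main obstacle is controlling the $F_{n,2}$ contribution to $\varepsilon_n$: in the regime $d(n)^2\gg n$, Theorem~\ref{thm:mainFluid} cannot be invoked at level $k=2$, so the fluid behavior of $U_{n,2}$ is unavailable as a black box. In fact, the natural martingale fluctuations of $U_{n,2}$ are of order $d(n)/\sqrt{n}$, which is not vanishing. Nonetheless, the mean-reverting drift $\approx\lambda U_{n,1}-U_{n,2}$ produces a restoring force, and I would establish tightness of $U_{n,2}$ via a second-moment Lyapunov computation on its generator, propagating the initial control \eqref{eq:diffusion_condition_2} forward to obtain $\sup_{s\le T}E[U_{n,2}(s/d(n))^2]=O(d(n)^2/n)$; this in turn gives $\sqrt{n/d(n)^3}\,U_{n,2}(s/d(n))\to 0$ in probability uniformly on $[0,T]$, killing the critical piece of $\varepsilon_n$. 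Verifying the compact-containment and modulus-of-continuity hypotheses for the martingale FCLT, and tightness of $\hat U_n$ itself through the Aldous criterion, are then secondary technical steps supported by the explicit linear drift and the quadratic-variation control above.
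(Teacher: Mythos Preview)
Your overall architecture---semimartingale decomposition, linearization of the drift around $u_1$, a martingale FCLT for the compensated jump parts, and a Gronwall-type identification of the limit---is correct and structurally parallel to the paper's proof. The paper packages the same ingredients slightly differently: it introduces an auxiliary Markov process $\widehat U_n$ obtained from the $V_{n,1}$ representation \eqref{eq:exact} by deleting the $V_{n,2}$ correction, applies Kurtz's strong-approximation theorem directly to $\widehat U_n$ (which absorbs both the martingale FCLT and the drift linearization in one stroke), and then uses Gronwall to show $\sqrt{n/d(n)}\,|U_{n,1}-\widehat U_n|\Rightarrow 0$. Your direct route and the paper's auxiliary-process route buy essentially the same thing.

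The one place where your plan is substantially more laborious than necessary is the control of the $F_{n,2}$ (equivalently $V_{n,2}$) contribution, which you flag as ``the main obstacle'' and propose to handle via a second-moment Lyapunov computation driven by the mean-reverting drift $\lambda U_{n,1}-U_{n,2}$. That intuition is misleading here: under $n/d(n)^2\to 0$, the upward jumps of $U_{n,2}$ have size $d(n)^2/n\to\infty$ and occur at rate $O(n/d(n)^2)\to 0$, so the relevant mechanism is not a restoring force but simply that, with probability tending to one, there are \emph{no} arrivals into level two on $[0,T]$ at all. The paper exploits this in one line: dropping the departure term in \eqref{eq:exact},
\[
\sqrt{n\,d(n)}\;V_{n,2}\!\Bigl(\tfrac{t}{d(n)}\Bigr)\;\le\;\sqrt{n\,d(n)}\;V_{n,2}(0)\;+\;\sqrt{\tfrac{d(n)}{n}}\;A_{n,2}\!\Bigl(\tfrac{\lambda n}{d(n)^2}\!\int_0^t U_{n,1}(s)\,\ud s\Bigr),
\]
and both terms on the right vanish, the first by \eqref{eq:diffusion_condition_2} and the second because the Poisson argument tends to zero while its prefactor does too. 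This single bound replaces your entire Lyapunov program, and it also explains \emph{why} the hypothesis $n/d(n)^2\to 0$ appears in the theorem: it is exactly the condition that kills all level-$\ge 2$ activity on the relevant time scale. (Incidentally, your stated target $\sup_{s\le T}E[U_{n,2}(s)^2]=O(d(n)^2/n)$ is slightly too strong to be consistent with the initial condition, which only gives $U_{n,2}(0)^2=o(d(n)^3/n)$; the argument above sidesteps this bookkeeping entirely.)
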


We anticipate that this theorem can be generalized as follows. The process $U_{n,k}(t)$ couples with $u_{k+1}(t)$ (the scaling limit of $U_{k+1}(t)$), but the fact that their scaling behavior is different ($\sqrt{n/d(n)^k}$ vs.~$\sqrt{n/d(n)^{k+1}}$) introduces complications for the proof technique used for Theorem~\ref{thm:mainDiffusion}.

\begin{conjecture}\label{con:mainConjecture}
  Consider a sequence of system indexed by $n$. Suppose that $\lim_{n\to\infty} n/d(n)=\infty$ and fix $k\le K$, where $K$ is defined in the beginning of this section. Assume that $U_{n,k}(0)$ is deterministic for all $n$ and $k\leq K$, and that
  there exists $v_1,\dots,v_K\in\Rea_+$ and $v_1^*,\dots,v_K^*\in\Rea$ such that
  \begin{equation*}
    \lim_{n\to\infty} \sqrt{\frac{n}{d(n)^k}}\left( U_{n,k}(0)-v_k \right)=v_k^*.
  \end{equation*}
  Additionally, assume that
  \begin{equation*}
      \lim_{n\to\infty} \sqrt{n\:d(n)^{K+1}} \left(F_{n,K+1}(0)+F_{n,K+2}(0)+\cdots\right)=0.
  \end{equation*}
  Then we have, as $n\to\infty$,
  \begin{equation*}
    \sqrt{\frac{n}{d(n)^k}} \left( U_{n,k}(t)-u_k(t)+ \frac{1}{d(n)} u_{k+1}(t) \right) ~\Rightarrow~ Z_k(t),
  \end{equation*}
  where we interpret $u_{K+1}(t)$ as zero, and $Z_1(t)$ is the solution of the following Ito integral equation:
  \begin{equation*}
    Z_1(t)= v_1^*+\sqrt{\lambda}\,B^{(1)}_1(t)-\int_0^t\!\sqrt{1-e^{-u_1(s)}}\,\ud B^{(2)}_1(s) -\int_0^t e^{-u_1(s)}\,Z_1(s)\,\ud s,
  \end{equation*}
  and, for $k=2,\dots,K$, $Z_k(t)$ is the solution of the following Ito integral equation:
  \begin{equation*}
    Z_k(t)=v_k^*+\int_0^t\!\sqrt{\lambda\,u_{k-1}(s)}\:\ud B^{(1)}_{k}(s)-\int_0^t\!\sqrt{u_k(s)}\,\ud B^{(2)}_{k}(s) -\int_0^t Z_k(s)\,\ud s,
  \end{equation*}
  for independent Wiener processes $B^{(1)}_k(t)$ and $B^{(2)}_k(t)$.
\end{conjecture}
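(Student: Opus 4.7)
The plan is to extend the strategy behind Theorem~\ref{thm:mainDiffusion} by induction on $k$, working with the joint fluctuation processes $V_{n,k}(t):=\sqrt{n/d(n)^k}\bigl(U_{n,k}(t)-u_k(t)+u_{k+1}(t)/d(n)\bigr)$ for $k=1,\ldots,K$, under the convention $u_{K+1}\equiv 0$. First I would write the Doob--Meyer decomposition $U_{n,k}(t)=U_{n,k}(0)+\int_0^t D_{n,k}(s)\,ds+M_{n,k}(t)$. In real time, $F_{n,k}$ jumps by $\pm 1/n$ only when an arrival promotes a buffer from length $k-1$ to $k$ (rate $\lambda n(F_{n,k-1}-F_{n,k})$) or when the server selects a buffer whose sampled maximum equals $k$ (rate $n[(1-F_{n,k+1})^{d(n)}-(1-F_{n,k})^{d(n)}]$). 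After the time rescaling by $d(n)$ and Taylor-expanding $(1-F_{n,k})^{d(n)}$, one finds for $k\geq 2$ that $D_{n,k}=\lambda U_{n,k-1}-U_{n,k}+(U_{n,k+1}-\lambda U_{n,k})/d(n)+E_{n,k}$ with $E_{n,k}=O(1/d(n)^{k-1})$ (using the fluid bound on $U_{n,k}$); the case $k=1$ replaces $\lambda U_{n,k-1}-U_{n,k}$ by $e^{-U_{n,1}}-1+\lambda$, as in Theorem~\ref{thm:mainDiffusion}. The correction $u_{k+1}/d(n)$ built into $V_{n,k}$ is exactly what is needed to cancel the deterministic $1/d(n)$ contribution of this drift, via the fluid identity $u_{k+1}'=\lambda u_k-u_{k+1}$.

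Second, I would treat the martingale part. Since the arrival and departure events driving $M_{n,k}$ are of different types, and events driving $M_{n,k}$ and $M_{n,k'}$ for $k\neq k'$ are disjoint, the $M_{n,k}$ are mutually orthogonal and each splits further into orthogonal arrival and departure pieces. A direct computation, combined with the fluid limit of Theorem~\ref{thm:mainFluid}, identifies the predictable quadratic variations of the rescaled arrival and departure pieces of $\sqrt{n/d(n)^k}\,M_{n,k}$ as $\lambda\int_0^t u_{k-1}(s)\,ds$ and $\int_0^t u_k(s)\,ds$ respectively; the martingale functional central limit theorem then gives joint convergence to the asserted sum of independent Wiener integrals $\int_0^t\sqrt{\lambda u_{k-1}(s)}\,dB^{(1)}_k(s)-\int_0^t\sqrt{u_k(s)}\,dB^{(2)}_k(s)$.

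Assembling the decomposition for $V_{n,k}$ and unfolding $U_{n,j}-u_j=V_{n,j}\sqrt{d(n)^j/n}-u_{j+1}/d(n)$ for $j=k-1,k,k+1$, one obtains a closed prelimit identity of the schematic form $V_{n,k}(t)=V_{n,k}(0)-\int_0^t V_{n,k}(s)\,ds+d(n)^{-1/2}\int_0^t(\lambda V_{n,k-1}(s)+V_{n,k+1}(s))\,ds+\sqrt{n/d(n)^k}\,M_{n,k}(t)+R_{n,k}(t)$, where $R_{n,k}$ collects the Taylor error and the tail contribution from $\sum_{j\geq K+1}F_{n,j}$. Provided $R_{n,k}\to 0$ uniformly on compacts in probability and $(V_{n,1},\ldots,V_{n,K})$ is tight, the explicit $d(n)^{-1/2}$ prefactor forces the coupling terms to vanish, leaving the decoupled linear SDE for $Z_k$ from the statement. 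Uniqueness of strong solutions for this linear system, together with convergence of finite-dimensional distributions, then yields the announced joint weak convergence.

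The main obstacle will be controlling $R_{n,k}$. The Taylor error contributes a drift of order $\sqrt{n/d(n)^{3k-2}}$ to $V_{n,k}$, which is negligible only under a strengthened growth hypothesis (for instance $n/d(n)^{4}\to 0$ when $k=2$, paralleling $n/d(n)^2\to 0$ in Theorem~\ref{thm:mainDiffusion}); the conjecture likely needs to be supplemented with such a condition. More subtly, the tail contribution $\sqrt{n\,d(n)^K}\sum_{j\geq K+1}F_{n,j}(t)$ must be bounded not only at $t=0$ (the content of the generalized initial assumption) but uniformly in $t$. I would approach this by propagating the bound via a supermartingale or Lyapunov argument exploiting the contractive structure of the fluid ODE in Definition~\ref{def:fluidSystem}, or by adapting the upper-bound technique of \cite{Alanyali:2008vf}. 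Once these a priori bounds are established, the contractive drift $-V_{n,k}$ combined with martingale moment estimates should deliver the joint tightness needed to close the induction.
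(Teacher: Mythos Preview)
The statement you are attempting to prove is presented in the paper as a \emph{conjecture}, not a theorem: the paper gives no proof and explicitly flags the obstruction, namely that ``$U_{n,k}(t)$ couples with $u_{k+1}(t)$ \ldots\ but the fact that their scaling behavior is different ($\sqrt{n/d(n)^k}$ vs.~$\sqrt{n/d(n)^{k+1}}$) introduces complications for the proof technique used for Theorem~\ref{thm:mainDiffusion}.'' There is therefore nothing in the paper to compare your argument against.

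That said, your outline is a natural extension of the paper's method for Theorem~\ref{thm:mainDiffusion} (which proceeds via the Poisson representation \eqref{eq:exact} and Kurtz's strong approximation rather than a Doob--Meyer decomposition, but these are equivalent here), and the device of absorbing the $u_{k+1}/d(n)$ correction into the centering is exactly the content of the conjectured statement. You have also correctly isolated the two genuine gaps that prevent this from being a proof. First, your own Taylor estimate shows the residual drift in $V_{n,k}$ is of order $\sqrt{n/d(n)^{3k-2}}$, and for $k\ge 2$ this does \emph{not} vanish under the stated hypotheses (which only require $n/d(n)^K\to\infty$); the conjecture as written appears to be missing a condition analogous to $n/d(n)^2\to 0$ in Theorem~\ref{thm:mainDiffusion}, and your proposal does not resolve this. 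Second, propagating the tail bound $\sqrt{n\,d(n)^K}\sum_{j\ge K+1}F_{n,j}(t)\to 0$ from $t=0$ to all $t$ is precisely the multi-scale difficulty the paper alludes to; your suggestion of a Lyapunov/supermartingale argument is plausible but is a sketch of a program, not a proof. In short: your plan is sound as a roadmap, it correctly diagnoses why the paper stops at a conjecture, and it implicitly suggests the statement needs a strengthened growth hypothesis on $d(n)$ to be provable by these methods.
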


Next, we utilize above our limit theorems to establish approximations of the processes in our system and show their accuracy by simulation.

\section{Approximation and validation}
\label{sec:numerics}
In this section, we propose diffusion approximations based on our limit theorems in the previous section, and 
we investigate the discrepancy between these approximations and the original pre-limit system.
In addition, we examine the trade-off between performance (average queue length) and complexity (the number of samples) through simulation.

Our limit theorems are stated in terms of a function $d(n)$, but here we investigate systems for which we sample a fixed number of buffers $d$. For simplicity, we only consider systems that are initially empty.

\subsection{Diffusion approximations}

Our diffusion limit theorem suggests the following approximation for the distribution of the fraction of nonempty queues in a system with $n$ buffers and $d$ samples:
\begin{equation*}
  \tag{Diffusion Approximation}
  F_{n,1}(t) ~\approx~
  \frac{1}{d} u_1(d t) + \frac{1}{\sqrt{n d}} Z(d t),
\end{equation*}
where $u_1(t)$ is the fluid limit of $U_{n,1}(t)$ from Theorem~\ref{thm:mainFluid} and $Z(t)$ is the Gaussian process defined in Theorem~\ref{thm:mainDiffusion}. One of the assumptions in Theorem~\ref{thm:mainDiffusion} is $\lim_{n\to\infty} n/d(n)^2=0$, which may not be plausible for systems with relatively small $d$ compared to $n$; we confirm this later. Our conjecture in Section~\ref{sec:main_results} suggests adjusting the Diffusion Approximation as follows:
\begin{equation*}
  \tag{Modified Diffusion Approximation}
  F_{n,1}(t) ~\approx~
  \frac{1}{d} u_1(d t) - \frac{1}{d^2}u_2(d t)+\frac{1}{\sqrt{n d}} Z(d t),
\end{equation*}
where $u_1(t)$ and $Z(t)$ are the same as the Diffusion Approximation, and $u_2(t)$ is the fluid limit of $U_{n,2}(t)$ in Theorem~\ref{thm:mainFluid}.

Since $Z$ is a centered Gaussian process, the distribution of $F_{n,1}(t)$ is approximately normal for fixed $t$. To be able to describe the variance, we need $\sigma^2(t)=\var[Z(t)]$. From standard SDE results, $\sigma^2(t)$ satisfies the ODE
\begin{equation}
  \frac{d}{dt} \sigma^2(t) ~=~ -2e^{-u_1(t)}\sigma^2(t)+\lambda+(1-e^{-u_1(t)}),\label{eq:variance}
\end{equation}
with initial condition $\sigma^2(0)=0$.

To investigate the accuracy of our approximations, we collect simulation samples of the fraction of nonempty buffers $F_{n,1}(t)$ and compare the resulting histogram with our approximations. The normal distributions from our two approximations of $F_{n,1}(t)$ have the same variance, but their means are different.

First, we check the accuracy of Diffusion Approximation for moderate $n$ and $d$. For $\lambda=0.7$ and $n=20$, we produce
a histogram with 100000 samples of $F_{20,1}(50)$ for $d=4$ and $d=12$ and compare this with 
the probability density function of the normal distribution from Diffusion Approximation. 
Figure~\ref{fig:diffusion_moderate} shows the results.
\begin{figure}
\centering
\includegraphics[width=\textwidth]{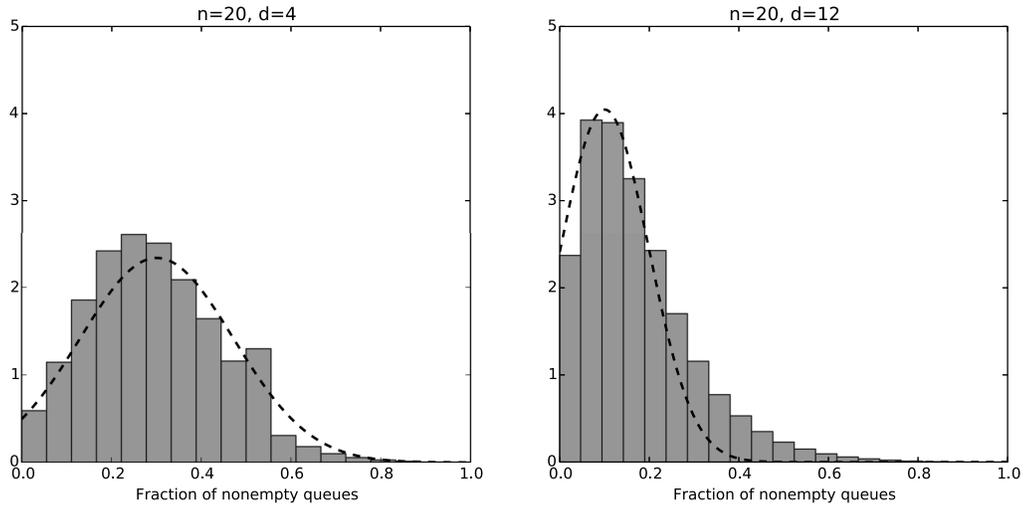}
\caption{Diffusion Approximation versus simulation of the distribution of $F_{n,1}(50)$ for moderate $n$ and $d$. Left: $n=20, d=4$, right: $n=20, d=12$.}
\label{fig:diffusion_moderate}
\end{figure}
Through these and other experiments, we find that Diffusion Approximation is accurate even when $n$ is moderate and it works best in cases where $d$ is small compared to $n$, which is the regime of our theoretical results. When $d$ is large compared to $n$, then the distribution becomes more concentrated at $0$. 

Second, we verify our approximations for large $n$ and small $d$. 
Applying algorithms with small computational complexity to large systems is most meaningful in practice, and this is the case in our model when the number of buffers $n$ is large and the number of samples $d$ is small. By simulation, we obtain histograms of $1000$ samples of the fraction of nonempty queues at time $50$ ($F_{n,1}(50)$) for $n=1000$ and $\lambda=0.7$ as in Figure~\ref{fig:diffusion_large}. 
This result shows that the ODE \eqref{eq:variance} gives a good approximation of the variance of $F_{n,1}(50)$. 
For the mean of $F_{n,1}(50)$, Modified Diffusion Approximation is more accurate than Diffusion Approximation when $d$ is relatively small.
As $d$ grows, Diffusion Approximation better estimates the mean of $F_{n,1}(50)$. This shows that our theorems provide good approximations in practically attractive situations.
\begin{figure}
\centering
\includegraphics[width=\textwidth]{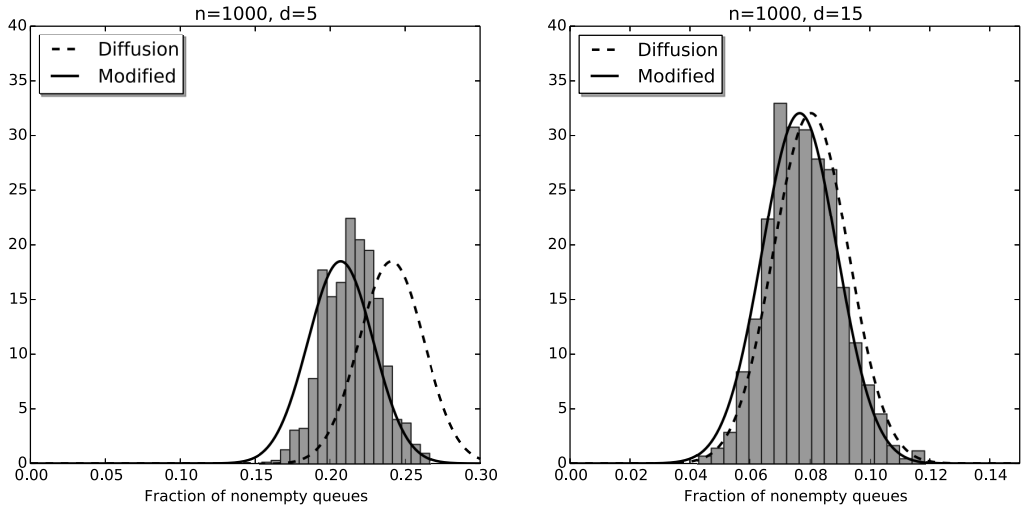}
\caption{Our approximations versus simulation of the distribution of $F_{n,1}(50)$ for large $n=1000$. Left: $d=5$, right: $d=15$. Dash lines are from Diffusion Approximation and solid lines are based on Modified Diffusion Approximation.}
\label{fig:diffusion_large}
\end{figure}
    
We next empirically study when our approximation works well, with the objective to find a criterion depending on $n$, $d$, and $\lambda$ for the validity of our approximation. 
From the Modified Diffusion Approximation, we find the following approximations for the mean and the standard deviation of $F_{n,1}(t)$ for reasonably large $t$:
\begin{equation*}
  \mu ~\simeq~ \left(\frac{1}{d}-\frac{\lambda}{d^2}\right)\log\left(\frac{1}{1-\lambda}\right),\quad
  \sigma ~\simeq~ \frac{1}{\sqrt{nd}}\frac{\lambda}{1-\lambda},
\end{equation*}
where we use Proposition~\ref{prop:fluidLimit} and we set $d\sigma^2(t)/dt=0$ in (\ref{eq:variance}).

We use the Kolmogorov-Smirnov distance between our approximation and the empirical distribution (from simulation) as a measure of accuracy of our approximation.
We find that the quality of our approximation depends on $n$, $d$, and $\lambda$ mostly through $\mu$ and $\sigma$, and 
Figure~\ref{fig:KS_test} summarizes the data from our experiments by plotting the results in the $(\mu,\sigma)$ plane.
The experiments show that the Modified Diffusion Approximation works well if $\mu$ and $\sigma$ satisfy
$\sigma< \mu/3$ and $\sigma>2(\mu-1/4)/3$.
We have also tested the choice of $t$ on the accuracy of our approximation, and we found that it does not have a 
significant effect.
\begin{figure}
\centering
\includegraphics[width=3in]{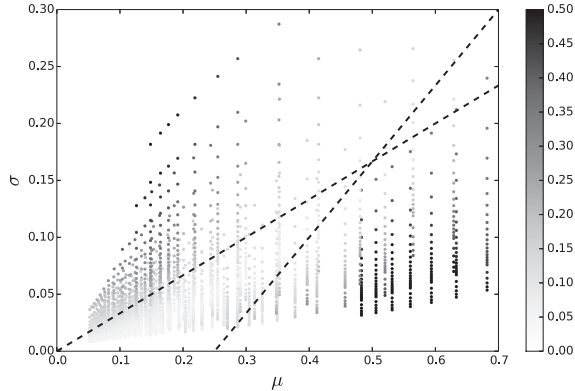}
\caption{The Kolmogorov-Smirnov test statistic for various parameter values.
We use 5000 simulation replications to estimate the distribution of $F_{n,1}(100)$ for
$n=100,150,\ldots,1000, 1200,\ldots,2000$,
$d=2,5,7,10,12,\dots,30$, and $\lambda=0.80,0.82,0.84,\dots,0.98,0.99$.}
\label{fig:KS_test}
\end{figure}

Another observation we get from these simulation experiments is that the variance is not negligible compared to the mean of the fraction of nonempty queues even when $n$ is large. Existing literature exclusively focuses on the performance of algorithms in the mean-field large-buffer regime with the fluid limit, but our experiments highlight that the second-order approximation is also important. Our work is the first investigation in this direction. 

\subsection{Performance vs. complexity}
To see the trade-off between performance and complexity, we measure the complexity and performance
through CPU-time and average queue length, respectively. For a system with $n$ buffers where the server samples $d$ buffers, the CPU-time consumed during a fixed time is $O(d n)$ and our fluid limit theorem concludes that the average queue length is proportional to $1/d$. 

For a fixed number $n$ of buffers in the system, we simulate systems with varying number of sampled buffers $d$. We run our simulation up to time $t=50$ with $\lambda=0.7$ and measure the CPU-time consumption and the average queue length at $t=50$ for $1000$ samples of each case.  The results of our experiments are represented graphically in Figure~\ref{fig:tradeoff_1}. 

Figure~\ref{fig:tradeoff_1} shows that CPU-time per buffer (computational complexity) is indeed proportional to the number of sampled buffers $d$, and that the average queue length (performance) is inverse-proportional to the sample size $d$. 
Therefore, the simulation study confirms our theoretical results on the quantitative trade-off between performance and complexity.
\begin{figure}
\centering
\includegraphics[width=2.4in]{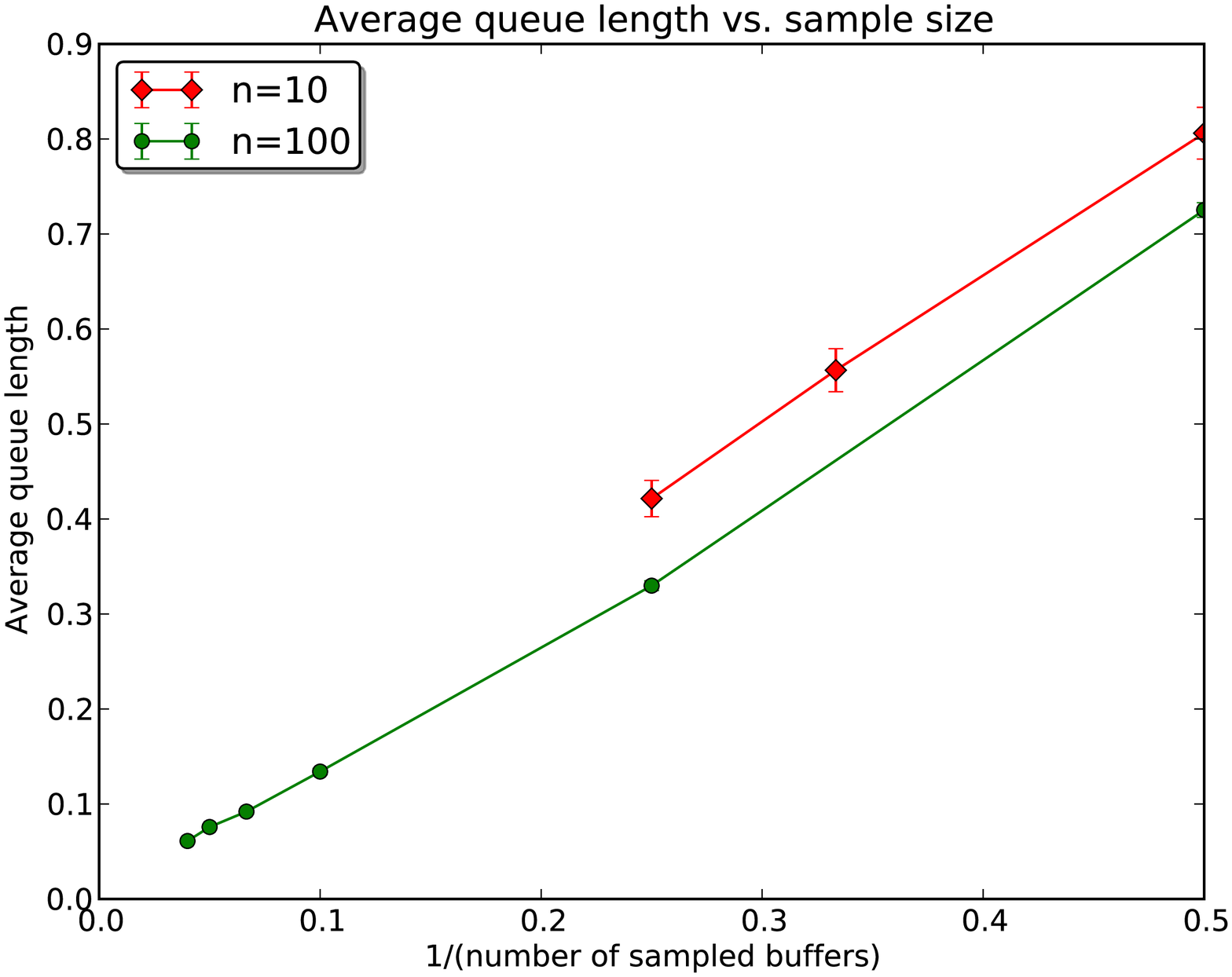}
\includegraphics[width=2.4in]{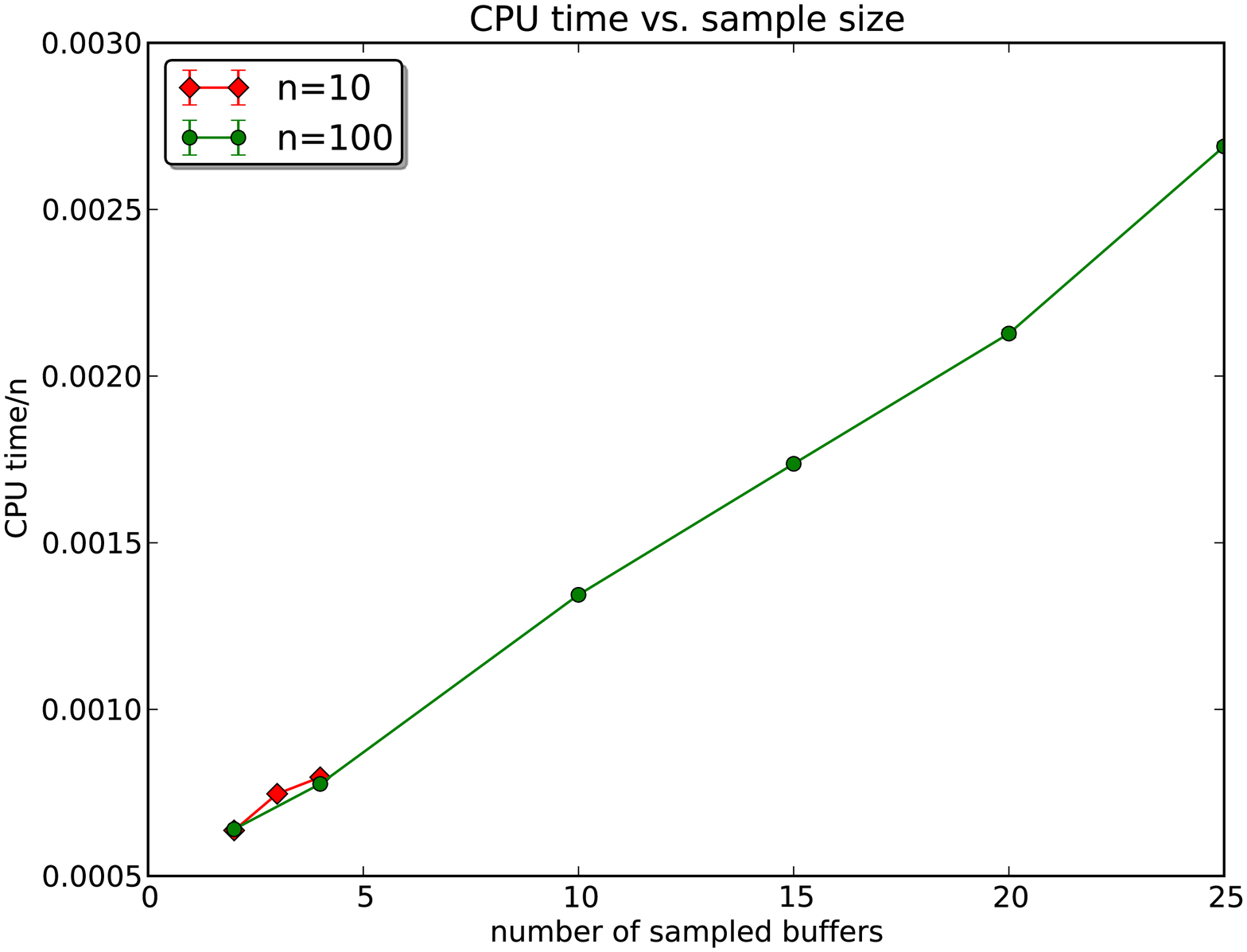}
\caption{Performance versus complexity for $n=10$, $d=2,3,4$ and for $n=100$, $d=2,4,10,15,20,25$.
Left: average queue length vs. sample size $d$. Right: CPU time per buffer vs. sample size $d$.}
\label{fig:tradeoff_1}
\end{figure}

\section{Proofs of the limit theorems}\label{sec:proofs}
This section provides the proofs of the two theorems in Section~\ref{sec:main_results}. Before going into detail, we first introduce the key ideas in the proofs. 

\subsection*{Starting point for the proofs}
We now discuss the starting point of the proofs of our limit theorems, 
particularly focusing on Theorem~\ref{thm:mainFluid}.
Several additional technical tools are needed to fill in the details, and we work these out in Sections~\ref{sec:dynamics_of_the_first_term}--\ref{sec:second_order_approximation}.

Instead of working directly with the random variables $U_{n,k}$, as in \cite{Tsitsiklis:wz}  
we rely on the auxiliary random variables
\begin{equation*}
	V_{n,k}(t)=\sum_{j=k}^\infty F_{n,j}(t),
\end{equation*}
for all $k\geq 0$.

For $k\geq 1,$ $V_{n,k}(\,\cdot\,)$ increases by $1/n$ when there is an arrival in queues with length greater than or equal to $k-1$ 
and it decreases by $1/n$ if the server processes a task in a queue with length greater than or equal to $k$. 
Thus, we have
\begin{equation}\label{eq:exact}
	V_{n,k}(t)=V_{n,k}(0)+\frac{1}{n}\,A_{n,k}\!\left(\lambda\,n\!\int_0^t\!F_{n,k-1}(s)\,\ud s\right)-
	\frac{1}{n}\,S_{n,k}\!\left(n\!\int_0^t \left[1-\left(1-F_{n,k}(s)\right)^{d(n)}\right]\,\ud s\right),
\end{equation}
where $A_{n,k}(\,\cdot\,)$ and $S_{n,k}(\,\cdot\,)$ are independent Poisson processes with rate $1$. 

Upon multiplying (\ref{eq:exact}) by $d(n)^k$ and rescaling time by a factor $d(n)$, we obtain, after substituting $U$ in terms of $F$, 
\begin{eqnarray*}
	d(n)^k\:V_{n,k}\!\left(\frac{t}{d(n)}\right)&=&d(n)^k \:V_{n,k}(0)+\frac{d(n)^k}{n}\:A_{n,k}\!\left(\lambda\,\frac{n}{d(n)^k}\!\int_0^t U_{n,k-1}(s)\,\ud s\right) \\
&&-\frac{d(n)^k}{n}\,S_{n,k}\!\left(\frac{n}{d(n)}\int_0^t \left[1-\left(1-\frac{U_{n,k}(s)}{d(n)^k}\right)^{d(n)}\right]\,\ud s\right).
\end{eqnarray*}
Upon replacing $A_{n,k}$ and $S_{n,k}$ by their law-of-large-numbers approximations (the identity function), we get 
\begin{eqnarray*}
	d(n)^k\:V_{n,k}\!\left(\frac{t}{d(n)}\right)&\approx& d(n)^k\:V_{n,k}(0)+\lambda \int_0^t U_{n,k-1}(s)\,\ud s\\
&&\mbox{}-
	d(n)^{k-1} \int_0^t \left[1-\left(1-\frac{U_{n,k}(s)}{d(n)^k}\right)^{d(n)}\right]\,\ud s,
\end{eqnarray*}
and a similar `second order' representation can be obtained when $A_{n,k}$ and $S_{n,k}$ are replaced by their central limit theorem approximations.
For these approximations to be justified, we need $d(n)^k = o(n)$.
Continuing with the fluid approximation, since $U_0(t)=1$, we obtain for $k=1$,
\[
d(n)^k\:V_{n,1}\!\left(\frac{t}{d(n)}\right)~\approx~ d(n)^k\:V_{n,1}(0)+\lambda\,t-\int_0^t [1-e^{-U_{n,1}(s)}] \,\ud s,
\]
while we obtain for $k\ge 2$,
\[
d(n)^k\:V_{n,k}\!\left(\frac{t}{d(n)}\right)~\approx ~d(n)^k\:V_{n,k}(0)+\lambda \int_0^t U_{n,k-1}(s)\,\ud s- \int_0^t U_{n,k}(s)\,\ud s.
\]

Next we use the following relation between $V_{n,k}(t)$ and $U_{n,k}(t)$:
\begin{equation}
\label{eq:UintermsofV}
	U_{n,k}(t)~=~ d(n)^{k}\;V_{n,k}\!\left(\frac{t}{d(n)}\right)-d(n)^{k}\;V_{n,k+1}\!\!\left(\frac{t}{d(n)}\right).
\end{equation}
The second term on the right-hand side of (\ref{eq:UintermsofV}) vanishes on the fluid scale, but it has to be taken into account
on the diffusion scale. 

The above outline is formalized through a mathematical induction argument.
The next section is devoted to the induction base for the fluid limit theorem, $k=1$. 
Section~\ref{sec:higher_order_dynamics} considers the induction hypothesis for the fluid limit theorem.
Section~\ref{sec:second_order_approximation} addresses the proof of the diffusion limit theorem.

\subsection{Fluid limit: dynamics of the first term} 
\label{sec:dynamics_of_the_first_term}

In this section, we prove the base of the induction by showing the existence of the fluid limit of $U_{n,1}(t)$ and finding the dynamics of the limit. 
The strategy of the proof is the following:
\begin{enumerate}
	\item[1.] The proof evolves around the evolution of $d(n)\,V_{n,1}\!\left(t/d(n)\right)$ and $d(n)\,V_{n,2}\!\left(t/d(n)\right)$. By definition, we have 
	\begin{equation}\label{eq:baseRelation}
	U_{n,1}(t)~=~d(n)\,F_{n,1}\!\left(\frac{t}{d(n)}\right)~=~d(n)\,V_{n,1}\!\left(\frac{t}{d(n)}\right)-d(n)\,V_{n,2}\!\left(\frac{t}{d(n)}\right).
	\end{equation}
	\item[2.] We prove in Lemma~\ref{lemma:baseLemma1} that $d(n)\,V_{n,2}\!\left(t/d(n)\right)$ converges (in an appropriate sense) to the zero function. We then prove in Lemma \ref{lemma:baseLemma2}  that $d(n)\,V_{n,1}\!\left(t/d(n)\right)$ has a fluid limit.
A key tool in the latter is Lemma \ref{lemma:kuang} from the appendix, which requires showing that $d(n)\,V_{n,1}\!\left(t/d(n)\right)$ 
 is Lipschitz in some asymptotic sense.
	\item[3.] We deduce from \eqref{eq:baseRelation} that the fluid limits of $U_{n,1}(t)$ and $d(n)\,V_{n,1}\!\left(t/d(n)\right)$ are the same.
	Using \eqref{eq:exact} and the approach outlined in the previous section, we then formulate the differential equation satisfied by the fluid limit.
\end{enumerate}

First, we prove that $d(n)\,V_{n,2}\!\left(t/d(n)\right)$ converges to $0$ uniformly on compact sets for appropriate initial conditions. In particular, it has a fluid limit.
\begin{lemma}\label{lemma:baseLemma1}
	Consider a sequence of systems indexed by $n.$ Assume that $\lim_{n\to\infty} d(n)\,V_{n,2}(0)=0$ and that $\lim_{n\to\infty} F_{n,1}(0)=0.$ Then we have
	\begin{equation*}
		\lim_{n\to\infty} d(n)\,V_{n,2}\!\left(\frac{t}{d(n)}\right) = 0,
	\end{equation*}
	uniformly on compact sets, almost surely.
\end{lemma}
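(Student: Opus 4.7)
The plan is to exploit the nonnegativity of $V_{n,2}$ and drop the service-side Poisson contribution, reducing the problem to controlling only the arrival-side term.

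Starting from \eqref{eq:exact} with $k=2$ and setting $W_n(t):=d(n)\,V_{n,2}(t/d(n))$, the change of variables $s=u/d(n)$ gives
\begin{equation*}
W_n(t)\;=\;d(n)\,V_{n,2}(0)\;+\;\frac{d(n)}{n}\,A_{n,2}\!\left(\frac{\lambda n}{d(n)}\!\int_0^t\! F_{n,1}(u/d(n))\,\ud u\right)\;-\;\frac{d(n)}{n}\,S_{n,2}(\,\cdots\,).
\end{equation*}
Since $V_{n,2}\ge 0$, we have $W_n(t)\ge 0$, and dropping the nonnegative $S_{n,2}$-term yields an upper bound. By hypothesis $d(n)\,V_{n,2}(0)\to 0$, so it suffices to show that the $A_{n,2}$-term vanishes uniformly on compacts, almost surely.

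The FLLN for rate-$1$ Poisson processes on the scale $N=n/d(n)\to\infty$ gives
\begin{equation*}
\sup_{y\in[0,\lambda T]}\left|\frac{d(n)}{n}\,A_{n,2}\bigl((n/d(n))\,y\bigr)\,-\,y\right|\;\to\;0\quad\text{a.s.},
\end{equation*}
and since $\lambda\!\int_0^t F_{n,1}(u/d(n))\,\ud u\in[0,\lambda T]$ for $t\in[0,T]$, matters reduce to showing that $\int_0^t F_{n,1}(u/d(n))\,\ud u\to 0$ uniformly, a.s. For this I would use the elementary bound $F_{n,1}\le V_{n,1}$ combined with
\begin{equation*}
V_{n,1}(\tau)\;\leq\;V_{n,1}(0)\,+\,\frac{1}{n}\,A_{n,1}(\lambda n\tau),
\end{equation*}
obtained from \eqref{eq:exact} at $k=1$ by discarding the nonnegative service term. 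The hypotheses yield $V_{n,1}(0)=F_{n,1}(0)+V_{n,2}(0)\to 0$; and for $\tau\in[0,T/d(n)]$, the FLLN again gives $\frac{1}{n}\,A_{n,1}(\lambda n\tau)\to 0$ uniformly, a.s.\ (since the $x$-range $[0,\lambda T/d(n)]$ is contained in the fixed compact $[0,\lambda T]$ on which uniform convergence holds). Hence $V_{n,1}(u/d(n))\to 0$, and therefore $F_{n,1}(u/d(n))\to 0$, uniformly in $u\in[0,T]$, almost surely.

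The main technical care is in handling Poisson processes evaluated at random time-changes; however, all the relevant random arguments are dominated by deterministic constants (namely $\lambda n T/d(n)$ in each place), so the standard uniform FLLN on a compact domain suffices without any independence issues. Combining the pieces gives $0\le W_n(t)\le d(n)\,V_{n,2}(0)+o(1)\to 0$ uniformly on $[0,T]$, almost surely, which is the claim.
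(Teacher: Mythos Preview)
Your proof is correct and follows essentially the same approach as the paper: drop the service term in \eqref{eq:exact} at $k=2$, reduce to showing $F_{n,1}(t/d(n))\to 0$, and then conclude via the FLLN and a random time-change. The only minor difference is that the paper bounds $F_{n,1}(t/d(n))$ using an auxiliary rate-$(1+\lambda)n$ Poisson process $W_n$ counting all events, whereas you bound it via $F_{n,1}\le V_{n,1}$ and the arrival part of \eqref{eq:exact} at $k=1$; both routes are equally elementary.
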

\begin{proof}
	Let $W_n(\,\cdot\,)$ be the process which increases by $1$ whenever there is an arrival, a service completion, or the end of a wasted service in the $n$th system. Note that $W_n(\,\cdot\,)$ is a Poisson process with rate $(1+\lambda)n.$ For any $t>0,$ the total number of increases of $F_{n,1}(\,\cdot\,)$ in $(0,t\,]$ is less than or equal to $W_n(t).$ Since $F_{n,1}(\,\cdot\,)$ increases by $1/n$ at a time, we obtain, for $t>0$,
	\begin{equation*}
		0~\leq~ F_{n,1}\!\left(\frac{t}{d(n)}\right) ~\leq~ F_{n,1}(0)+\frac{1}{n}\,W_n\!\left(\frac{t}{d(n)}\right),
	\end{equation*}
	By our assumption on $F_{n,1}(0)$ and Lemma \ref{lemma:flln}, 
$F_{n,1}(t/d(n))$ thus converges almost surely to $0$ as $n\to\infty$, uniformly on compact sets.
	From \eqref{eq:exact}, we also deduce that
	\begin{eqnarray*}
		d(n)\,V_{n,2}\!\left(\frac{t}{d(n)}\right)
		&\leq& d(n)\,V_{n,2}(0)+\frac{d(n)}{n}\,A_{n,2}\!\left(\lambda\, n \int_0^{t/d(n)} F_{n,1}(s)\,\ud s\right)\\
		&=& d(n)\, V_{n,2}(0)+\frac{d(n)}{n}\,A_{n,2}\!\left(\frac{\lambda\, n}{d(n)} \int_0^{t} F_{n,1}\!\left(\frac{s}{d(n)}\right)\,\ud s\right).
	\end{eqnarray*}
	Upon applying Lemma~\ref{lemma:integral}, Lemma~\ref{lemma:flln}, and Lemma~\ref{lemma:randomTime},  the second term converges almost surely to $0$ as $n\to\infty$, uniformly on compact sets.
The claim thus follows from the assumption on $V_{n,2}(0)$.
\end{proof}

In the next lemma,
we prove that, almost surely, $d(n)\,V_{n,1}\!\left(t/d(n)\right)$ satisfies the assumptions of Lemma \ref{lemma:kuang}, i.e., that 
it is Lipschitz in some asymptotic sense. This is a key ingredient in establishing the 
existence of the fluid limit of $d(n)\,V_{n,1}\!\left(t/d(n)\right)$.

\begin{lemma}\label{lemma:baseLemma2}
	Consider a sequence of systems indexed by $n.$ Assume that there is some $v\in\Rea_+$ such that
	\begin{equation*}
		\lim_{n\to\infty} d(n)\,V_{n,1}(0)=v.
	\end{equation*}
	Then any subsequence of $\left\{d(n)\,V_{n,1}\!\left(t/d(n)\right)\right\}_{n\in\Nat}$ has a subsequence that converges to a Lipschitz function uniformly on compact sets, almost surely. 
\end{lemma}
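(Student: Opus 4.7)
The plan is to start from the exact representation \eqref{eq:exact} with $k=1$, noting that $F_{n,0}\equiv 1$, which gives
\[
V_{n,1}(t)\;=\;V_{n,1}(0)+\tfrac1n A_{n,1}(\lambda n t)-\tfrac1n S_{n,1}\!\Bigl(n\!\int_0^t[1-(1-F_{n,1}(s))^{d(n)}]\,\mathrm ds\Bigr).
\]
Multiplying by $d(n)$ and running time at the scale $t/d(n)$, the scaled process takes the form
\[
d(n)V_{n,1}\!\Bigl(\tfrac{t}{d(n)}\Bigr)=d(n)V_{n,1}(0)+\tfrac{d(n)}{n}A_{n,1}\!\Bigl(\tfrac{\lambda n}{d(n)}\,t\Bigr)-\tfrac{d(n)}{n}S_{n,1}\!\Bigl(\tfrac{n}{d(n)}\!\int_0^t g_n(s)\,\mathrm ds\Bigr),
\]
with $g_n(s):=1-(1-F_{n,1}(s/d(n)))^{d(n)}\in[0,1]$. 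Since $n/d(n)\to\infty$, the FLLN for Poisson processes (Lemma~\ref{lemma:flln}) applied with a random time change (Lemma~\ref{lemma:randomTime}, Lemma~\ref{lemma:integral}) will allow me to replace $\tfrac{d(n)}{n}A_{n,1}$ and $\tfrac{d(n)}{n}S_{n,1}$ by their identity approximations with error tending to $0$ uniformly on any compact interval, almost surely.

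From here I would verify an asymptotic Lipschitz bound. For $0\le t_1<t_2\le T$,
\[
\bigl|d(n)V_{n,1}\!\bigl(\tfrac{t_2}{d(n)}\bigr)-d(n)V_{n,1}\!\bigl(\tfrac{t_1}{d(n)}\bigr)\bigr|\;\le\;\tfrac{d(n)}{n}\bigl[A_{n,1}(\tfrac{\lambda n}{d(n)}t_2)-A_{n,1}(\tfrac{\lambda n}{d(n)}t_1)\bigr]+\tfrac{d(n)}{n}\bigl[S_{n,1}(\tfrac{n}{d(n)}\int_0^{t_2}g_n)-S_{n,1}(\tfrac{n}{d(n)}\int_0^{t_1}g_n)\bigr].
\]
The first bracketed difference is, by the FLLN approximation, at most $\lambda(t_2-t_1)+\varepsilon_n^A(T)$ with $\varepsilon_n^A(T)\to 0$ a.s. The second difference is controlled by the fact that $g_n\le 1$: the time-changed argument increases by at most $\tfrac{n}{d(n)}(t_2-t_1)$, so the FLLN approximation gives a bound of $(t_2-t_1)+\varepsilon_n^S(T)$ with $\varepsilon_n^S(T)\to 0$ a.s. Combining these yields
\[
\bigl|d(n)V_{n,1}\!\bigl(\tfrac{t_2}{d(n)}\bigr)-d(n)V_{n,1}\!\bigl(\tfrac{t_1}{d(n)}\bigr)\bigr|\;\le\;(1+\lambda)(t_2-t_1)+\varepsilon_n(T),
\]
with $\varepsilon_n(T)\to 0$ almost surely. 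Together with the assumed convergence $d(n)V_{n,1}(0)\to v$, this gives uniform boundedness on compact sets plus the asymptotic Lipschitz property required by Lemma~\ref{lemma:kuang}.

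The conclusion then follows directly from Lemma~\ref{lemma:kuang}: on the almost-sure event where the above FLLN approximations hold uniformly on every compact interval, any subsequence of $\{d(n)V_{n,1}(t/d(n))\}$ admits a further subsequence converging uniformly on compact sets to a $(1+\lambda)$-Lipschitz function. The main obstacle I anticipate is bookkeeping the two FLLN approximations simultaneously and verifying that the error terms $\varepsilon_n(T)$ are indeed uniform in the split point of the increment, not just pointwise; this is where Lemma~\ref{lemma:integral} (monotonicity of the time change) and the uniform convergence in Lemma~\ref{lemma:flln} play the decisive role, since they upgrade the pointwise FLLN to a uniform-on-compacts statement that is stable under composition with the random time changes $\tfrac{n}{d(n)}\int_0^{\cdot}g_n(s)\,\mathrm ds$.
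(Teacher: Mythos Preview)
Your argument is correct. You work directly from the exact representation \eqref{eq:exact}, splitting the increment of $d(n)V_{n,1}(t/d(n))$ into its arrival and departure parts, bounding each separately via the uniform FLLN for $A_{n,1}$ and $S_{n,1}$, and then invoking Lemma~\ref{lemma:kuang}. The paper instead introduces a single auxiliary Poisson process $W_n$ of rate $(1+\lambda)n$ that counts every event (arrival, service completion, or wasted service) and uses it as a crude majorant for the total variation of $V_{n,1}$ on any subinterval; one FLLN application to $W_n$ then yields the asymptotic Lipschitz bound. Your decomposition is a bit longer but gives the sharper Lipschitz constant $(1+\lambda)$ rather than the paper's $2(1+\lambda)$, and it also foreshadows the structure of the later arguments (e.g., Lemma~\ref{lemma:highLemma2}), where the arrival and departure parts must be handled separately anyway. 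Your concern about uniformity of the error terms in the split point is correctly addressed: since the FLLN in Lemma~\ref{lemma:flln} gives uniform convergence on $[0,T]$ and the random time change $\int_0^{\cdot} g_n$ stays in $[0,T]$ (because $0\le g_n\le 1$), both $\varepsilon_n^A(T)$ and $\varepsilon_n^S(T)$ are genuinely independent of $t_1,t_2$.
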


\begin{proof}
	Fix $T>0$, and recall the construction of the Poisson process $W_n(\,\cdot\,)$ with rate $(1+\lambda) n$ 
from the proof of Lemma~\ref{lemma:baseLemma1}.
For $a,b\in [0,T]$ with $a<b$, the total number of increases or decreases of $V_{n,1}(t)$ in $(a,b\,]$ is less than or equal to $|W_n(a)-W_n(b)|$.
	Since $d(n)\,V_{n,1}(\,\cdot\,)$ increases or decreases by $d(n)/n$ at a time, there exists some $\gamma_n=\gamma_n(T)$ such that $\lim_{n\to\infty}\gamma_n=0$ almost surely and
	\begin{eqnarray*}
		\left|d(n)\,V_{n,1}\!\left(\frac{a}{d(n)}\right)-d(n)\,V_{n,1}\!\left(\frac{b}{d(n)}\right)\right|
		&\leq& 2\left|\frac{d(n)}{n}\,W_n\!\left(\frac{a}{d(n)}\right)-\frac{d(n)}{n}\,W_n\!\left(\frac{b}{d(n)}\right)\right| \\
		&\leq& 2(1+\lambda)|a-b|+\gamma_n.
	\end{eqnarray*}
	 By Lemma~\ref{lemma:kuang}, any subsequence of $\left\{d(n)\,V_{n,k}\!\left(t/d(n)\right)\right\}_{n\in\Nat}$ has a subsequence that converges to a $2(1+\lambda)$-Lipschitz function uniformly on $[0,T]$,  almost surely.
\end{proof}
	
With \eqref{eq:baseRelation} and the preceding lemmas, we can prove that any subsequence of $\left\{U_{n,1}(t)\right\}_{n\in\Nat}$ has a convergent subsequence which converges to a Lipschitz function $u(t)$.
In the next proposition, we prove that the limit is independent of the subsequence, so that convergence of $\left\{U_{n,1}(t)\right\}_{n\in\Nat}$ to $u(t)$ on compact sets follows.

\begin{proposition}\label{prop:firstTerm}
	Consider a sequence of systems indexed by $n.$ Suppose that for some $v\in\Rea_+$,
	\begin{equation*}
		\lim_{n\to\infty} d(n)\, V_{n,1}(0)=v,\quad \lim_{n\to\infty} d(n)\, V_{n,2}(0)=0,
	\end{equation*}
	almost surely. Then there exists a Lipschitz function $u:[0,\infty)\to\Rea_+$ such that, almost surely,
	\begin{equation*}
		\lim_{n\to\infty} U_{n,1}(t)=u(t),
	\end{equation*}
	uniformly on compact sets and $u$ is the unique solution to the differential equation
	\begin{equation*}
		u'(t)=e^{-u(t)}-(1-\lambda)
	\end{equation*}
	with initial value $u(0)=v.$
	Also, almost surely,
	\begin{equation*}
		\lim_{n\to\infty} d(n)\, V_{n,2}\!\left(\frac{t}{d(n)}\right)=0,
	\end{equation*}
	uniformly on compact sets.
\end{proposition}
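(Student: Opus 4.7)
The plan is to carry out the three-step roadmap described in the preamble to the lemmas. First I would check that the hypotheses of Lemma~\ref{lemma:baseLemma1} hold under the assumptions of the proposition: since $d(n)\,V_{n,1}(0)\to v\in\Rea_+$ and $d(n)\to\infty$, we have $V_{n,1}(0)\to 0$ and therefore $F_{n,1}(0)\le V_{n,1}(0)\to 0$, while $d(n)\,V_{n,2}(0)\to 0$ is given outright. Lemma~\ref{lemma:baseLemma1} thus yields
\begin{equation*}
\lim_{n\to\infty} d(n)\,V_{n,2}\!\left(\frac{t}{d(n)}\right) = 0
\end{equation*}
uniformly on compact sets, almost surely, which is the third conclusion. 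Lemma~\ref{lemma:baseLemma2} then tells me that any subsequence of $\{d(n)\,V_{n,1}(t/d(n))\}$ has a further subsequence converging a.s.~uniformly on compact sets to a Lipschitz limit. Combining this with the identity
\begin{equation*}
U_{n,1}(t) = d(n)\,V_{n,1}\!\left(\frac{t}{d(n)}\right) - d(n)\,V_{n,2}\!\left(\frac{t}{d(n)}\right)
\end{equation*}
and the first step, the same subsequence of $\{U_{n,1}\}$ converges uniformly on compact sets to the same Lipschitz function $u$.

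The main work is to identify every such subsequential limit $u$. Starting from~\eqref{eq:exact} for $k=1$ (using $F_{n,0}\equiv 1$), multiplying by $d(n)$, and rescaling time by $1/d(n)$, I obtain
\begin{align*}
d(n)\,V_{n,1}\!\left(\frac{t}{d(n)}\right) &= d(n)\,V_{n,1}(0) + \frac{d(n)}{n}\,A_{n,1}\!\left(\frac{\lambda n}{d(n)}\,t\right) \\
&\quad - \frac{d(n)}{n}\,S_{n,1}\!\left(\frac{n}{d(n)}\int_0^t \bigl[1-(1-F_{n,1}(s/d(n)))^{d(n)}\bigr]\,\ud s\right).
\end{align*}
Since $n/d(n)\to\infty$, the functional LLN for Poisson processes (Lemma~\ref{lemma:flln}) combined with the random time-change lemma (Lemma~\ref{lemma:randomTime}) replaces each scaled Poisson process by its mean identity function. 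Writing $F_{n,1}(s/d(n))=U_{n,1}(s)/d(n)$ and using $(1-x_n/d(n))^{d(n)}\to e^{-x}$ whenever $x_n\to x$, together with the uniform convergence of $U_{n,1}$ to $u$ on the compact interval along the chosen subsequence, the service-term integrand converges uniformly to $1-e^{-u(s)}$. Passing to the limit yields
\begin{equation*}
u(t) = v + \lambda t - \int_0^t \bigl[1 - e^{-u(s)}\bigr]\,\ud s,
\end{equation*}
so $u$ is $C^1$ and satisfies $u'(t)=e^{-u(t)}-(1-\lambda)$ with $u(0)=v$. The standard Picard--Lindel\"of theorem then forces every subsequential limit to coincide, so the whole sequence $\{U_{n,1}\}$ converges uniformly on compact sets to $u$.

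The main technical obstacle I anticipate is the uniform control of $(1-U_{n,1}(s)/d(n))^{d(n)}$ inside the integral. On any compact $[0,T]$ the elementary bound $|(1-x/d)^d - e^{-x}|\le C\,x^2/d$ for $x\in[0,d/2]$, combined with the a.s.~boundedness of $U_{n,1}(s)$ on $[0,T]$ along the chosen subsequence (which follows from the asymptotic Lipschitz estimate derived inside the proof of Lemma~\ref{lemma:baseLemma2} together with convergence of the initial value), licenses the interchange of limit and integral. The remaining passages to the limit (the deterministic arrival term, the time-change of $S_{n,1}$, and the equivalence of the integral equation with the ODE) are routine given the tools already assembled in Lemmas~\ref{lemma:baseLemma1} and~\ref{lemma:baseLemma2}.
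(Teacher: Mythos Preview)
Your proposal is correct and follows essentially the same route as the paper: verify the hypotheses of Lemmas~\ref{lemma:baseLemma1} and~\ref{lemma:baseLemma2}, extract subsequential Lipschitz limits via the decomposition~\eqref{eq:baseRelation}, pass to the limit in~\eqref{eq:exact} using the functional LLN and a time-change argument to obtain the integral equation, and conclude by ODE uniqueness. The only cosmetic difference is that the paper handles the convergence of the integrand $1-(1-U_{n,1}(s)/d(n))^{d(n)}$ by citing Lemmas~\ref{lemma:composition_convergence} and~\ref{lemma:dai}, whereas you invoke the explicit estimate $|(1-x/d)^d-e^{-x}|\le Cx^2/d$ together with a.s.\ local boundedness of $U_{n,1}$; both arguments achieve the same end.
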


\begin{proof}
	By the existence of the limit of $d(n)\,V_{n,1}(0),$ we have $\lim_{n\to\infty} F_{n,1}(0)=0.$ 
	Consider the sequence of bivariate random processes $\left\{\left(d(n)\,V_{n,1}\!\left(t/d(n)\right),U_{n,1}(t)\right)\right\}_{n\in\Nat}.$ 
	From \eqref{eq:baseRelation} and the preceding two lemmas, any subsequence has a subsequence which converges uniformly on compact sets, almost surely. Suppose the convergent subsequence converges to $\left(u(t),u(t)\right),$ for some Lipschitz function $u:[0,\infty)\to\Rea$.  
	
 	We obtain from \eqref{eq:exact} that
	\begin{eqnarray*}
		\lefteqn{d(n)\,V_{n,1}\!\left(\frac{t}{d(n)}\right)}\\
		&=& d(n)\,V_{n,1}(0)+\frac{d(n)}{n}\,A_{n,1}\!\!\left(\lambda n\,\int_0^{t/d(n)} 1 \,\ud s \right) \\
		&&\mbox{}- \frac{d(n)}{n}\,S_{n,1}\!\!\left(n\int_0^{t/d(n)} \left[1-\left(1-F_{n,1}(s)\right)^{d(n)}\right] \,\ud s \right)\\
		&=& d(n)\,V_{n,1}(0)+\frac{d(n)}{n}\,A_{n,1}\!\!\left(\lambda \frac{n}{d(n)} t\right) \\
		&&\mbox{} -\frac{d(n)}{n}\, S_{n,1}\!\!\left(\frac{n}{d(n)}\int_0^t \left[1-\left(1-\frac{U_{n,1}(t)}{d(n)}\right)^{d(n)}\right]\,\ud s\right).
	\end{eqnarray*}
	Thus, letting $n$ go to infinity along the convergent subsequence, we find that, almost surely, the second term converges to $\lambda t$ uniformly on compact sets by Lemma~\ref{lemma:flln}. Moreover, by 
	Lemma~\ref{lemma:composition_convergence}, Lemma~\ref{lemma:dai}, 
	 Lemma~\ref{lemma:flln}, and Lemma~\ref{lemma:randomTime},
	the last term converges almost surely to $\int_0^t \left(1-e^{-u(s)}\right)\,\ud s,$ uniformly on compact sets. Therefore $u(t)$ satisfies the integral equation
	\begin{equation*}
		u(t)=v+ \lambda\,t + \int_0^t \left(1-e^{-u(s)}\right) \,\ud s.
	\end{equation*}
	Since $u$ is absolutely continuous, $u$ is differentiable almost everywhere. If $u(t)$ is differentiable at $t$, we obtain
	\begin{equation}
		u'(t)=e^{-u(t)}-(1-\lambda). \label{eq:firstTermEquation}
	\end{equation}
	By standard existence and uniqueness theorems for ordinary differential equations, 
	there is a unique solution $u:[0,\infty)\to\Rea_+$ satisfying the above differential equation \eqref{eq:firstTermEquation} with initial condition $u(0)=v.$ 
	Thus, every subsequence of $\left\{U_{n,1}(t)\right\}_{n\in\Nat}$ has a subsequence which converges to the same limit $u(t).$ 
	Therefore, $\left\{U_{n,1}(t)\right\}_{n\in\Nat}$ converges to $u(t)$ uniformly on compact sets, almost surely.
\end{proof}

\subsection{Fluid limit: dynamics of higher terms} 
\label{sec:higher_order_dynamics}

In this section, we state and prove the induction step. Let $k\geq 1$ and assume throughout that $\lim_{n\to\infty} {n}/{d(n)^{k+1}}=\infty.$ 
We work under the induction hypothesis that there exists a Lipschitz continuous function $u_k:[0,\infty)\to\Rea_+$ such that
\begin{align}\label{eq:condition1}
	\lim_{n\to\infty} U_{n,k}(t)=u_k(t),
\end{align}
uniformly on compact sets, almost surely, and 
\begin{align}\label{eq:condition2}
	\lim_{n\to\infty} d(n)^k\,V_{n,k+1}\!\!\left(\frac{t}{d(n)}\right) = 0,
\end{align}
uniformly on compact sets, almost surely. Starting from this hypothesis, we prove the existence of the fluid limit of $U_{n,k+1}(t)$ and characterize it through a differential equation. 

The proof roughly follows the same outline as for the dynamics of the first term in Section~\ref{sec:dynamics_of_the_first_term}, i.e., we first establish the existence of the fluid limits and then use (\ref{eq:exact}) to establish the differential equations they satisfy.
The details, however, are different;
for instance, we must avoid a circular argument for establishing an asymptotic Lipschitz property of $d(n)^{k+1}\,V_{n,k+1}\!\left(t/d(n)\right)$ (Lemma \ref{lemma:highLemma2}), 
an issue that did not arise in Section~\ref{sec:dynamics_of_the_first_term}.

\begin{lemma}\label{lemma:highLemma1}
	Consider a sequence of systems indexed by $n$, for which \eqref{eq:condition1} and \eqref{eq:condition2} hold. 
	Assume that 
	\begin{equation*}
		\lim_{n\to\infty} d(n)^{k+1}\,V_{n,k+2}(0)=0,
	\end{equation*}
	almost surely. Then we have
	\begin{equation*}
		\lim_{n\to\infty} d(n)^{k+1}\,V_{n,k+2}\!\left(\frac{t}{d(n)}\right)=0,
	\end{equation*}
	uniformly on compact sets, almost surely.
\end{lemma}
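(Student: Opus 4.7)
The argument mirrors Lemma~\ref{lemma:baseLemma1} but uses the induction hypothesis \eqref{eq:condition2} in place of the elementary bound on $F_{n,1}$. Starting from \eqref{eq:exact} with index $k+2$, I would drop the (nonnegative) service-completion term to obtain the upper bound
\begin{equation*}
V_{n,k+2}(t) \;\leq\; V_{n,k+2}(0) + \frac{1}{n}\,A_{n,k+2}\!\left(\lambda n\int_0^t F_{n,k+1}(s)\,\ud s\right),
\end{equation*}
then multiply by $d(n)^{k+1}$ and change the time variable from $t/d(n)$ to $t$:
\begin{equation*}
d(n)^{k+1}\,V_{n,k+2}\!\left(\frac{t}{d(n)}\right) \;\leq\; d(n)^{k+1}\,V_{n,k+2}(0) + \frac{d(n)^{k+1}}{n}\,A_{n,k+2}\!\left(\frac{\lambda n}{d(n)}\int_0^t F_{n,k+1}\!\left(\frac{s}{d(n)}\right)\ud s\right).
\end{equation*}

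The key step is to control the argument of $A_{n,k+2}$. Since $F_{n,k+1}\le V_{n,k+1}$ pointwise, one has
\begin{equation*}
\frac{\lambda n}{d(n)}\int_0^t F_{n,k+1}\!\left(\frac{s}{d(n)}\right)\ud s \;\leq\; \frac{\lambda n}{d(n)^{k+1}}\int_0^t d(n)^{k}\,V_{n,k+1}\!\left(\frac{s}{d(n)}\right)\ud s \;=:\; \frac{\lambda n}{d(n)^{k+1}}\,\varepsilon_n(t),
\end{equation*}
and the induction hypothesis \eqref{eq:condition2} together with Lemma~\ref{lemma:integral} gives $\varepsilon_n(t)\to 0$ almost surely, uniformly on compact sets. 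Writing $c_n = n/d(n)^{k+1}\to\infty$, the noise term becomes $c_n^{-1}\,A_{n,k+2}\!\left(\lambda c_n\,\varepsilon_n(t)\right)$, which by the functional law of large numbers (Lemma~\ref{lemma:flln}) combined with the random-time-change lemma (Lemma~\ref{lemma:randomTime}) converges almost surely to $\lambda\cdot 0 = 0$ uniformly on compact sets. Combining this with the hypothesis $d(n)^{k+1}V_{n,k+2}(0)\to 0$ gives the claim.

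\textbf{Main obstacle.} The proof itself is largely routine once one sees the telescoping bound $F_{n,k+1}\le V_{n,k+1}$, which converts a quantity at scale $d(n)^k$ (which we control by induction) into a bound for the arrival-rate integral. The only subtle point is that $\varepsilon_n(t)$ is random and $t$-dependent, so the random-time-change lemma must be applied to the composition $c_n^{-1}A_{n,k+2}(c_n \cdot\varepsilon_n(t))$ rather than at a fixed deterministic time; this is exactly the scenario handled by Lemma~\ref{lemma:randomTime}, and no circular argument with an (as-yet-unestablished) asymptotic Lipschitz property of $d(n)^{k+1}V_{n,k+1}(t/d(n))$ is needed here, since only convergence to zero (not tightness at a nontrivial limit) is being asserted.
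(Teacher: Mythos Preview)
Your proposal is correct and follows essentially the same route as the paper's proof: both drop the service term from \eqref{eq:exact}, rescale time and space, and use the induction hypothesis \eqref{eq:condition2} together with Lemmas~\ref{lemma:integral}, \ref{lemma:flln}, and \ref{lemma:randomTime} to show the Poisson arrival term vanishes. The only cosmetic difference is that the paper keeps $d(n)^k F_{n,k+1}(s/d(n))$ in the integrand and notes that \eqref{eq:condition2} directly forces it to zero, whereas you insert the intermediate bound $F_{n,k+1}\le V_{n,k+1}$ first; these are equivalent since that inequality is precisely how \eqref{eq:condition2} controls $F_{n,k+1}$.
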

\begin{proof}
By \eqref{eq:exact}, we have
	\begin{eqnarray*}
		\lefteqn{d(n)^{k+1}\,V_{n,k+2}\!\left(\frac{t}{d(n)}\right)}\\
		&\leq& d(n)^{k+1}\,V_{n,k+2}(0)+\frac{d(n)^{k+1}}{n}\,A_{n,k+2}\!\left(\lambda n \int_0^{t/d(n)} F_{n,k+1}(s) \,\ud s\right)\\
		&=&    d(n)^{k+1}\,V_{n,k+2}(0)+\frac{d(n)^{k+1}}{n}\,A_{n,k+2}\!\left(\lambda \frac{n}{d(n)^{k+1}} \int_0^{t} d(n)^k\,\,F_{n,k+1}\!\left(\frac{s}{d(n)}\right) \,\ud s\right).
	\end{eqnarray*}
	Hypothesis \eqref{eq:condition2} implies that $\lim_{n\to\infty} d(n)^k\, F_{n,k+1}\!\left(t/d(n)\right)=0$ almost surely, uniformly on compact sets. Thus, by Lemma~\ref{lemma:integral}, Lemma~\ref{lemma:flln}, and Lemma~\ref{lemma:randomTime}, we obtain that, almost surely,
	\begin{equation*}
		\lim_{n\to\infty} d(n)^{k+1}\,\,V_{n,k+2}\!\left(\frac{t}{d(n)}\right)=0,
	\end{equation*}
	uniformly on compact sets.
\end{proof}

To show the existence of the fluid limit of $d(n)^{k+1}\,V_{n,k+1}\!\left(t/d(n)\right)$, we need to prove that it is Lipschitz in some asymptotic sense, cf.~Lemma~\ref{lemma:kuang}.
For the case $k=0$, we used a scaled version of a Poisson process $W_n(t)$ to prove this for $d(n) \,V_{n,1}\!\left(t/d(n)\right)$.
However, when $k\geq 1,$ a similar modification of $W_n(t)$ does not work 
for $d(n)^{k+1}\,V_{n,k+1}\!\left(t/d(n)\right)$ since $d(n)^{k+1}\,W_n\!\left(t/d(n)\right)$ diverges for $k>0$.
We resolve this difficulty by partitioning an expression for $d(n)^{k+1}\,V_{n,k+1}\!\left(t/d(n)\right)$ into three parts -- an initial part, an arrival part, and a departure part; see (\ref{eq:exact}).
Assuming the existence of a limit for the initial part,
we then show that the other two parts admit fluid limits.

As we shall see, the arrival part depends on $U_{n,k}(t)$ and the induction hypothesis guarantees its convergence. Thus, the existence of the fluid limit of the arrival part follows immediately.
We cannot directly apply the induction hypothesis for the departure part because it turns out to involve $U_{n,k+1}(t)$, the very
quantity we are trying to establish a fluid limit for.
To circumvent this issue, we show that $U_{n,k+1}(t)$ is locally bounded and this allows us
to show that the departure part is Lipschitz continuous in the sense of Lemma \ref{lemma:kuang}.

\begin{lemma}\label{lemma:highLemma2}
	Consider a sequence of systems indexed by $n,$ for which \eqref{eq:condition1} and \eqref{eq:condition2} hold. 
	Suppose that there exists some $v\in\Rea_+$ such that $\lim_{n\to\infty} d(n)^{k+1}\,V_{n,k+1}(0)=v$, almost surely.
	Then any subsequence of $\left\{d(n)^{k+1}\,V_{n,k+1}\!\left(t/d(n)\right)\right\}_{n\in\Nat}$ has a subsequence which converges almost surely to a Lipschitz continuous function uniformly on compact sets.
\end{lemma}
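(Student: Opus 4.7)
The plan is to apply Lemma~\ref{lemma:kuang} to $d(n)^{k+1}\,V_{n,k+1}(t/d(n))$ on each compact interval $[0,T]$; this requires an asymptotic Lipschitz bound of the form $|x_n(a)-x_n(b)|\le L(T)\,|a-b|+\gamma_n$ with $\gamma_n\to 0$ almost surely. As the text above the lemma notes, the crude approach used in Lemma~\ref{lemma:baseLemma2}---bounding total increments by a single Poisson counter $W_n$---fails here because $d(n)^{k+1}\,W_n(t/d(n))/n$ diverges once $k\ge 1$. I must instead decompose (\ref{eq:exact}) after time rescaling into three pieces:
\[
d(n)^{k+1}\,V_{n,k+1}\!\!\left(\tfrac{t}{d(n)}\right)~=~I_n+A_n(t)-D_n(t),
\]
with $I_n=d(n)^{k+1}\,V_{n,k+1}(0)$, with $A_n(t)$ the arrival term involving $A_{n,k+1}$ and integrand $U_{n,k}(s)$, and with $D_n(t)$ the departure term involving $S_{n,k+1}$ and integrand $1-(1-U_{n,k+1}(s)/d(n)^{k+1})^{d(n)}$. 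By assumption $I_n\to v$, and since the induction hypothesis~\eqref{eq:condition1} supplies a uniform bound on $U_{n,k}$ over $[0,T]$, Lemma~\ref{lemma:flln} together with Lemma~\ref{lemma:randomTime} give $A_n(t)\to\lambda\!\int_0^t u_k(s)\,\ud s$ uniformly on compact sets; the limit is Lipschitz, so $A_n$ is itself asymptotically Lipschitz in the required sense.

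The genuine obstacle is the departure part $D_n$, whose argument features $U_{n,k+1}$---the very process whose fluid behavior we are trying to establish. To break the apparent circularity, I would exploit the fact that $D_n(t)\ge 0$, which yields the a priori bound
\[
U_{n,k+1}(t)~\le~d(n)^{k+1}\,V_{n,k+1}\!\!\left(\tfrac{t}{d(n)}\right)~\le~I_n+A_n(t).
\]
Because $I_n$ converges and $A_n(t)$ converges uniformly on $[0,T]$ to a continuous function, there is almost surely a (random) threshold past which $U_{n,k+1}(t)\le M(T)$ for all $t\in[0,T]$. Given this, the elementary inequality $1-(1-x)^{d(n)}\le d(n)\,x$ for $x\in[0,1]$ bounds the integrand of the departure term by $U_{n,k+1}(s)/d(n)^k$, so the (monotone) argument of $S_{n,k+1}$ grows no faster than $n\,M(T)/d(n)^{k+1}$ per unit of time on $[0,T]$. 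Applying the Poisson functional law of large numbers (Lemma~\ref{lemma:flln}) then produces
\[
0~\le~D_n(b)-D_n(a)~\le~M(T)\,(b-a)+\gamma_n,\qquad 0\le a<b\le T,
\]
with $\gamma_n\to 0$ almost surely.

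Combining the three estimates gives the required asymptotic Lipschitz property on every compact interval, and Lemma~\ref{lemma:kuang} supplies the claimed subsequential convergence to a Lipschitz continuous function. The main difficulty, as described above, is the circular appearance of $U_{n,k+1}$ inside $D_n$; the resolution hinges on the nonnegativity of the departure term, which provides an $L^\infty$-bound on $U_{n,k+1}$ derived purely from the arrival side, in turn allowing the Poisson FLLN to control $D_n$ without prior knowledge of its fluid limit.
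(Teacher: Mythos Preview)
Your proposal is correct and follows essentially the same route as the paper: the same three-term decomposition of $d(n)^{k+1}V_{n,k+1}(t/d(n))$, the same resolution of the circularity by bounding $U_{n,k+1}$ a priori via the nonnegativity of the departure term together with the convergence of the arrival term, and the same conclusion via the Poisson FLLN and Lemma~\ref{lemma:kuang}. The only cosmetic difference is that you control the departure integrand with the elementary inequality $1-(1-x)^{d(n)}\le d(n)\,x$, whereas the paper uses the asymptotic bound $d(n)^k\bigl[1-(1-c/d(n)^{k+1})^{d(n)}\bigr]\le c+\varepsilon$ for large $n$; both give the same Lipschitz constant.
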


\begin{proof}
	Fix $T>0$. Decompose $d(n)^{k+1}\,V_{n,k+1}\!\left(t/d(n)\right)$ into three parts as follows:
	\begin{equation*}
		d(n)^{k+1}\,V_{n,k+1}\!\left(\frac{t}{d(n)}\right)=d(n)^{k+1}\,V_{n,k+1}(0)+I_n(t)-D_n(t),
	\end{equation*}
	where $I_n(t)$ and $D_n(t)$ are the total increase and decrease amount of $d(n)^{k+1}\,V_{n,k+1}\!\left(t/d(n)\right)$ by time $t,$ respectively. 

	The almost sure limit of $I_n(t)$ is readily found. Indeed, from \eqref{eq:exact}, we have
	\begin{equation*}
		I_n(t)=\frac{d(n)^{k+1}}{n}\,A_{n,k+1}\!\!\left(\lambda n\int_0^{t/d(n)} F_{n,k}(s) \,\ud s\right)
		=\frac{d(n)^{k+1}}{n}\,A_{n,k+1}\!\!\left(\frac{n}{d(n)^{k+1}}\int_0^t U_{n,k}(s) \,\ud s\right),
	\end{equation*}
	which converges almost surely to $\int_0^t u_{k}(s) \,\ud s$ uniformly on $[0,T]$ by Lemma \ref{lemma:integral} and \ref{lemma:randomTime}. 
	
 	Proving the almost sure limit of $D_n(t)$ is more intricate. We obtain from \eqref{eq:exact} that
	\begin{eqnarray}
		\lefteqn{D_n(t)}  \nonumber\\ 
		&=& \frac{d(n)^{k+1}}{n}\,S_{n,k+1}\!\!\left(n\int_0^{t/d(n)} \left(1-(1-F_{n,k+1}(s))^{d(n)}\right) \,\ud s\right)\nonumber \\
		&=& \frac{d(n)^{k+1}}{n}\,S_{n,k+1}\!\!\left(\frac{n}{d(n)}\int_0^{t} \left(1-(1-F_{n,k+1}(s/d(n)))^{d(n)}\right) \,\ud s\right)\nonumber\\
		&=& \frac{d(n)^{k+1}}{n}\,S_{n,k+1}\!\! \left(\frac{n}{d(n)^{k+1}}\int_0^{t} d(n)^k\left[1-\left(1-\frac{U_{n,k+1}(s)}{d(n)^{k+1}}\right)^{d(n)}\right] \,\ud s\right).\label{eq:DintermsofS}
	\end{eqnarray}
The first step for analyzing this expression is to bound the integrand.
Write $M=\sup_{t\in[0,T]}\int_0^t u_k(s) \,\ud s$ and let $\varepsilon>0$. 
Then, for all $t\in [0,T]$ and large enough $n$, we have
	\begin{equation*}
		U_{n,k+1}(t)~\leq~ d(n)^{k+1}\, V_{n,k+1}\!\left(\frac{t}{d(n)}\right)~\leq~ d(n)^{k+1}\,V_{n,k+1}(0)+ I_n(t)~\leq~ v+M+\varepsilon. 
	\end{equation*}
Thus, for all large enough $n,$ we have almost surely
	\begin{equation*}
		d(n)^{k} \left[1-\left(1-\frac{U_{n,k+1}(t)}{d(n)^{k+1}}\right)^{d(n)}\right] 
		~\leq~ d(n)^{k} \left[1-\left(1-\frac{v+M+\varepsilon}{d(n)^{k+1}}\right)^{d(n)}\right] 
		~\leq~ v+M+2\varepsilon
	\end{equation*}
	for all $t\in[0,T].$
Lemma~\ref{lemma:flln} implies that, almost surely,
\[
\lim_{n\to\infty} \sup_{a,b\in[0,(v+M+2\varepsilon)T]}
\left| \frac{d(n)^{k+1}}{n}\: S_{n,k+1}\!\left(\frac{n}{d(n)^{k+1}}b\right) -
\frac{d(n)^{k+1}}{n}\: S_{n,k+1}\!\left(\frac{n}{d(n)^{k+1}}a\right)-(b-a)\right|=0,
\]
which by (\ref{eq:DintermsofS}) shows that $\lim_{n\to\infty} \gamma_n = 0$ almost surely, where
\[
\gamma_n =\sup_{0\le s<t\le T}\left|
D_n(t)-D_n(s) - \int_s^t d(n)^k   \left[1-\left(1-\frac{U_{n,k+1}(u)}{d(n)^{k+1}}\right)^{d(n)}\right] \,\ud u\right|.
\]
We next note that, for $a,b\in[0,T]$,
	\begin{equation*}
		\left|D_n(a)-D_n(b)\right|~\leq~ (v+M+2\varepsilon)|a-b|+\gamma_n.
	\end{equation*}
	Thus, by Lemma \ref{lemma:kuang}, any subsequence of $\{D_{n,k}(\cdot)\}$ has a subsequence that converges to a Lipschitz continuous function.
	Therefore, any subsequence of $\left\{d(n)^{k+1}\,V_{n,k+1}\!\left(t/d(n)\right)\right\}_{n\in\Nat}$ has a subsequence converging to a Lipschitz continuous function uniformly on $[0,T],$ almost surely.
\end{proof}
  
By the preceding two lemmas, any subsequence of $\left\{U_{n,k+1}(t)\right\}_{n\in\Nat}$ has a subsequence which converges almost surely to a Lipschitz function uniformly on compact sets. We prove the induction step through the same argument used in the induction base.

\begin{proposition}\label{prop:higherTerm}
	Consider a sequence of systems indexed by $n$, for which the induction hypothesis \eqref{eq:condition1} and \eqref{eq:condition2} hold.
	Assume that there exists some $v\in\Rea_+$ such that $\lim_{n\to\infty} d(n)^{k+1}\,V_{n,k+1}(0)=v,$ almost surely and $\lim_{n\to\infty} d(n)^{k+1}\,V_{n,k+2}(0)=0.$ 
	Then the sequence $\left\{U_{n,k+1}(t)\right\}_{n\in\Nat}$ converges almost surely to the unique Lipschitz function $u_{k+1}:[0,\infty)\to\Rea_+$ satisfying 
	\begin{equation*}
		u_{k+1}'(t)=\lambda\,u_{k}(t) - u_{k+1}(t),
	\end{equation*}
	with $u(0)=v$, uniformly on compact sets. Moreover, we have 
	\begin{equation*}
		\lim_{n\to\infty} d(n)^{k+1}\,\,F_{n,k+2}\!\left(\frac{t}{d(n)}\right) =0,
	\end{equation*}
	uniformly on compact sets.
\end{proposition}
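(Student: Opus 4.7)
The plan is to mirror the induction base (Proposition~\ref{prop:firstTerm}), using Lemmas~\ref{lemma:highLemma1} and~\ref{lemma:highLemma2} in place of Lemmas~\ref{lemma:baseLemma1} and~\ref{lemma:baseLemma2}. First, Lemma~\ref{lemma:highLemma1}, applied with the hypothesis $\lim_{n\to\infty} d(n)^{k+1}V_{n,k+2}(0)=0$, directly gives $d(n)^{k+1}V_{n,k+2}(t/d(n))\to 0$ uniformly on compact sets almost surely; since $F_{n,k+2}\le V_{n,k+2}$, the second conclusion of the proposition follows immediately.

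Next, using the identity $U_{n,k+1}(t)=d(n)^{k+1}V_{n,k+1}(t/d(n))-d(n)^{k+1}V_{n,k+2}(t/d(n))$, I would consider the bivariate sequence $\{(d(n)^{k+1}V_{n,k+1}(t/d(n)),U_{n,k+1}(t))\}_{n\in\Nat}$. By Lemma~\ref{lemma:highLemma2} combined with the previous step, every subsequence admits a further subsequence that converges almost surely uniformly on compact sets to $(u(t),u(t))$ for some Lipschitz function $u:[0,\infty)\to\Rea_+$ with $u(0)=v$.

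To identify $u$, I would pass to the limit along the convergent subsequence in the scaled version of~\eqref{eq:exact} at level $k+1$, namely
\begin{align*}
d(n)^{k+1}V_{n,k+1}\!\left(\frac{t}{d(n)}\right) &= d(n)^{k+1}V_{n,k+1}(0) + \frac{d(n)^{k+1}}{n}A_{n,k+1}\!\left(\lambda\,\frac{n}{d(n)^{k+1}}\!\int_0^t U_{n,k}(s)\,\ud s\right)\\
&\quad - \frac{d(n)^{k+1}}{n}S_{n,k+1}\!\left(\frac{n}{d(n)^{k+1}}\!\int_0^t d(n)^k\!\left[1-\!\left(1-\frac{U_{n,k+1}(s)}{d(n)^{k+1}}\right)^{\!d(n)}\right]\ud s\right).
\end{align*}
By the induction hypothesis~\eqref{eq:condition1} and the FLLN tools (Lemmas~\ref{lemma:integral}, \ref{lemma:flln} and~\ref{lemma:randomTime}), the second term converges to $\lambda\int_0^t u_k(s)\,\ud s$. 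For the third term I would combine the a.s.~local boundedness $U_{n,k+1}(t)\le v+M+\varepsilon$ on $[0,T]$ already extracted inside the proof of Lemma~\ref{lemma:highLemma2} with the uniform Taylor estimate $d(n)^k[1-(1-x/d(n)^{k+1})^{d(n)}]=x+O(1/d(n)^k)$ for $x$ in a bounded set, to conclude that the integrand converges uniformly to $u(s)$; Lemma~\ref{lemma:composition_convergence} together with Lemmas~\ref{lemma:flln} and~\ref{lemma:randomTime} then produce the limit $\int_0^t u(s)\,\ud s$.

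Collecting these limits, $u$ satisfies $u(t)=v+\lambda\int_0^t u_k(s)\,\ud s-\int_0^t u(s)\,\ud s$, so by absolute continuity $u'(t)=\lambda u_k(t)-u(t)$ with $u(0)=v$. This linear ODE with a continuous inhomogeneous term $\lambda u_k$ has a unique solution, which I identify with $u_{k+1}$; hence every subsequential limit coincides with $u_{k+1}$ and the full sequence $\{U_{n,k+1}(t)\}_{n\in\Nat}$ converges to $u_{k+1}$ uniformly on compact sets. The main obstacle I anticipate is the uniform Taylor control in the departure integrand: the approximation has to be uniform in $s\in[0,T]$, and the error would blow up with $n$ unless one first secures the deterministic a priori bound on $U_{n,k+1}$ supplied by the first half of the proof of Lemma~\ref{lemma:highLemma2}. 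This is precisely the non-circular use of the arrival part that distinguishes the induction step from the base.
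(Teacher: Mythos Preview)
Your proposal is correct and follows essentially the same route as the paper's proof: invoke Lemmas~\ref{lemma:highLemma1} and~\ref{lemma:highLemma2} to extract subsequential Lipschitz limits of the coupled pair $(d(n)^{k+1}V_{n,k+1}(t/d(n)),U_{n,k+1}(t))$, pass to the limit in the scaled equation~\eqref{eq:exact}, identify the limit via uniqueness of the linear ODE, and deduce convergence of the full sequence. Your explicit handling of the departure integrand via the a~priori bound from Lemma~\ref{lemma:highLemma2} and a uniform Taylor expansion is exactly the content the paper packages into its citation of Lemmas~\ref{lemma:composition_convergence}, \ref{lemma:dai}, \ref{lemma:flln}, and~\ref{lemma:randomTime}.
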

\begin{proof}
		Consider the sequence of coupled random processes $\left\{\left(d(n)^{k+1}\, V_{n,k+1}\!\left(t/d(n)\right),U_{n,k+1}(t)\right)\right\}_{n\in\Nat}.$ By the preceding lemmas, any subsequence has a subsequence which converges uniformly on compact sets, almost surely. Moreover, the convergent subsequence converges to $(u_{k+1}(t),u_{k+1}(t))$ for some Lipschitz function $u_{k+1}(t)$.

	We deduce from \eqref{eq:exact} that
	\begin{eqnarray*}
		\lefteqn{d(n)^{k+1}\,V_{n,k+1}\!\left(\frac{t}{d(n)}\right)}\\
		&=& d(n)^{k+1}\,V_{n,k+1}(0) + \frac{d(n)^{k+1}}{n} A_{n,k+1}\left(\lambda n \int_0^{t/d(n)}F_{n,k}(s) \,\ud s\right)\\
		&&\mbox{} -\frac{d(n)^{k+1}}{n}\, S_{n,k+1}\!\! \left( n\int_0^{t/d(n)} \left(1-(1-F_{n,k+1}(s))^{d(n)}\right)\,\ud s\right)\\
		&=& d(n)^{k+1}\,V_{n,k+1}(0) + \frac{d(n)^{k+1}}{n}\,A_{n,k+1}\!\!\left(\frac{\lambda n}{d(n)^{k+1}}\int_0^t U_{n,k}(s)\,\ud s\right)\\
		&&\mbox{} -\frac{d(n)^{k+1}}{n}\,S_{n,k+1}\!\!\left(\frac{n}{d(n)^{k+1}}\int_0^{t} d(n)^{k}\left(1-\left(1-\frac{U_{n,k+1}(s)}{d(n)^{k+1}}\right)^{d(n)}\right)\,\ud s\right).
	\end{eqnarray*}
	From	Lemma~\ref{lemma:composition_convergence}, Lemma~\ref{lemma:dai}, Lemma~\ref{lemma:flln}, and Lemma~\ref{lemma:randomTime}, by taking the limit as $n\to\infty$ along the convergent subsequence, we conclude that $u_{k+1}(t)$ satisfies 
	\begin{equation*}
		u_{k+1}(t)=v+ \lambda \int_0^t u_{k}(s)\,\ud s - \int_0^t u_{k+1}(s) \,\ud s.
	\end{equation*}
	Since $u_{k+1}(t)$ is absolutely continuous, $u_{k+1}(t)$ is differentiable almost everywhere. If $u_{k+1}(t)$ is differentiable at $t$, we obtain
	\begin{equation}
		u_{k+1}'(t)=\lambda\,u_k(t) - u_{k+1}(t). \label{eq:highTermEquation}
	\end{equation}
	
	Since the differential equation \eqref{eq:highTermEquation} is linear with inhomogeneous term $\lambda u_k(t)$, it uniquely determines $u_{k+1}(t)$. Thus, every sequence of $\left\{U_{n,k+1}(t)\right\}_{n\in\Nat}$ has a subsequence that converges to the same limit $u_{k+1}(t)$. Therefore,
	$U_{n,k+1}(t)$ converges to $u_{k+1}(t)$ uniformly on compact sets, almost surely.
	
	The last statement of the proposition follows from Lemma~\ref{lemma:highLemma1}.
\end{proof}

Using Proposition~\ref{prop:firstTerm} and Proposition~\ref{prop:higherTerm}, we are now ready to prove our fluid limit theorem.

\begin{proof}[Proof of Theorem~\ref{thm:mainFluid}]
  From the assumptions of Theorem~\ref{thm:mainFluid}, we have
  \begin{equation*}
    \lim_{n\to\infty} U_{n,1}(0)=v_1
  \end{equation*}
  and
  \begin{equation*}
    \lim_{n\to\infty} d(n)\,V_{n,2}(0)=\lim_{n\to\infty}\left( \frac{U_{n,2}(0)}{d(n)}+\dots+\frac{U_{n,K}(0)}{d(n)^{K-1}}+\frac{d(n)^K(F_{n,K+1}(0)+\cdots)}{d(n)^{K-1}}  \right)=0.
  \end{equation*}
  Therefore, Proposition~\ref{prop:firstTerm} yields the fluid limit for $U_{n,1}(t)$, which is
\eqref{eq:condition1} for $k=1$. Lemma~\ref{lemma:baseLemma1} yields \eqref{eq:condition2} for $k=1$.
  
  We next assume that conditions \eqref{eq:condition1} and \eqref{eq:condition2} hold. 
The assumptions in Proposition~\ref{prop:higherTerm} hold because of the assumptions from Theorem~\ref{thm:mainFluid}, as can be seen with a similar argument as above. Thus, Proposition~\ref{prop:higherTerm} and Lemma~\ref{lemma:highLemma1} show that \eqref{eq:condition1} and \eqref{eq:condition2} hold, respectively, with $k$ replaced by $k+1$.
This induction argument establishes Theorem~\ref{thm:mainFluid}.
\end{proof}

\subsection{Diffusion limit} 
\label{sec:second_order_approximation}
In this section, we prove our second limit theorem, Theorem~\ref{thm:mainDiffusion},
a diffusion limit of $U_{n,1}(t)$. To this end, we introduce a new sequence of stochastic processes with the same fluid limit $u_1(t)$ as $\left\{U_{n,1}(t)\right\}_{n\in\Nat}$. For this new sequence, we can apply a result from Kurtz~\cite{kurtz:1978kq} to obtain its second-order approximation. 
We then compare the new processes with $\left\{U_{n,1}(t)\right\}_{n\in\Nat}$ and show that the difference vanishes.

\begin{proof}[Proof of Theorem~\ref{thm:mainDiffusion}]
  From \eqref{eq:exact}, we have
\begin{eqnarray}
  U_{n,1}(t)&=& -d(n)\,V_{n,2}\!\left(\frac{t}{d(n)}\right)+V_{n,1}(0) \nonumber \\ 
  &&\mbox{}+\frac{d(n)}{n}\,A_{n,1}\!\!\left(\frac{\lambda n}{d(n)}\, t\right) - \frac{d(n)}{n}\,S_{n,1}\!\!\left(\frac{n}{d(n)}\int_0^t \left[1-\left(1-\frac{U_{n,1}(s)}{d(n)}\right)^{d(n)}\right]\,\ud s\right). \label{eq:U_n_1}
\end{eqnarray}
Let $\lim_{n\to\infty} n/d(n)=\infty$ and $\lim_{n\to\infty} n/d(n)^2=0$ and assume that $U_{n,k}(0)$ for all $n$ and $k$, and $v_1\in\Rea_+$ satisfies conditions \eqref{eq:diffusion_condition_1} and \eqref{eq:diffusion_condition_2} in Theorem~\ref{thm:mainDiffusion}.

Define a sequence of stochastic processes $\{\widehat{U}_n(t)\}$ as the unique solution to 
\begin{equation}
  \widehat{U}_n(t)~=~v_1+\frac{d(n)}{n}\, A_{n,1}\!\!\left(\frac{n}{d(n)}\int_0^t f_{n,1}( \widehat{U}_n(s))\,\ud s\right) - \frac{d(n)}{n}\,S_{n,1}\!\!\left(\frac{n}{d(n)}\int_0^t f_{n,-1}( \widehat{U}_n(s))\,\ud s \right), \label{eq:tildeU}
\end{equation}
where $f_{n,1}=\lambda$ and 
\begin{equation*}
  f_{n,-1}(x)=\begin{cases} 1-\left(1-\frac{x}{d(n)}\right)^{d(n)} & \textrm{if $0\leq x\leq d(n)$}\\
 1-e^{-x}+e^{-d(n)}& \textrm{otherwise}
 \end{cases}.
\end{equation*}
The process $\widehat{U}_n(t)$ is coupled with $U_{n,1}(t)$. 
We next argue that $\widehat{U}_n(t)$ has a fluid and diffusion approximation prescribed by
the theory developed by Kurtz \cite{kurtz:1978kq} (see Lemma~\ref{lemma:kurtz} in the Appendix).  
Note that the index in \cite{kurtz:1978kq} is $N=n/d(n)$ and $n$ can 
often also be expressed in terms of $N$. This cannot always be done,
but we suppress the arguments needed to deal with such cases.

Let $f_{1}(x)=\lambda$ and $f_{-1}(x)=1-e^{-x}$. After noting that the maximum of $m \left( e^{-x}-(1-x/m)^m \right)$ over $0\leq x\leq m$ converges to $2$ as $m\to\infty$, we have, for large enough $n$,
\begin{equation*}
  \left| f_{n,-1}(x)-f_{-1}(x) \right|~\leq~ \frac{3}{d(n)} ~\leq~ 3\frac{d(n)}{n}.
\end{equation*}
Thus all conditions from Lemma~\ref{lemma:kurtz} are satisfied and $\widehat{U}_{n}(t)$ converges almost surely to $u_1(t)$ uniformly on compact sets, and we have the second-order approximation of $\widehat{U}_n(t)$ such that 
\begin{equation}
  \sqrt{\frac{n}{d(n)}} \left( \widehat{U}_n(t)-u_1(t) \right) ~\Rightarrow~ Z(t),
\label{eq:second_tilde}
\end{equation}
where $Z(t)$ satisfies 
\[
  Z(t)~=~\sqrt{\lambda} B^{(1)}(t)-\int_0^t \sqrt{1-e^{-u_1(s)}} \,\ud B^{(2)}(s) -\int_0^t e^{-u_1(s)}Z(s) \,\ud s 
\]
for independent Wiener processes $B^{(1)}(t)$ and $B^{(2)}(t)$.
We note that the results in \cite{kurtz:1978kq} yield strong approximations; here we only 
use weaker results of convergence in distribution. 

We next compare $U_{n,1}(t)$ with $\widehat{U}_n(t)$
and show that $\sqrt{n/d(n)} \,|U_{n,1}(t)-\widehat{U}_n(t)|\Rightarrow 0$.  
Fix some $T>0$.
From \eqref{eq:U_n_1} and \eqref{eq:tildeU}, we have, since $0\leq U_{n,1}(t)\leq d(n)$ and $f_{n,-1}(t)$ is $1$-Lipschitz continuous,
\begin{eqnarray*}
  \lefteqn{\sqrt{\frac{n}{d(n)}}\left| U_{n,1}(t)-\widehat{U}_n(t) \right|} \\
  &\leq& \sqrt{\frac{n}{d(n)}}\left( V_{n,1}(0)-v_1\right) + \sqrt{n d(n)}\,\,V_{n,2}\!\left(\frac{t}{d(n)}\right) \\
  && \mbox{} + \left|~\widetilde{S}_n\!\left(\int_0^t f_{n,-1}( {U}_{n,1}(s))\,\ud s\right)-\widetilde{S}_n\!\left(\int_0^t f_{n,-1}( \widehat{U}_n(s))\,\ud s\right)~\right| \\
  && \mbox{} + \sqrt{\frac{n}{d(n)}}  \int_0^t \left|~ f_{n,-1}(U_{n,1}(s)) - f_{n,-1}( \widehat{U}_n(s)) ~\right| ds \\
  &\leq& \varepsilon_n(t) + \int_0^t \sqrt{\frac{n}{d(n)}}\:\left|~ U_{n,1}(s) - \widehat{U}_n(s) ~\right| \,\ud s,
\end{eqnarray*}
where 
\begin{equation*}
  \widetilde{S}_n(t) ~=~ \sqrt{\frac{n}{d(n)}}\,\left( \frac{d(n)}{n}\,\, S_{n,1}\!\left(\frac{n}{d(n)}t\right) -t\right)
\end{equation*}
and
\begin{eqnarray*}
  \varepsilon_n(t) &=& \sqrt{\frac{n}{d(n)}}\left( V_{n,1}(0)-v_1\right)+\sqrt{n\, d(n)}\,\,V_{n,2}\!\left(\frac{t}{d(n)}\right) \\
  && \mbox{} + \left|~\widetilde{S}_n\!\left(\int_0^t f_{n,-1}(U_{n,1}(t))\,\ud s\right)-\widetilde{S}_n\!\left(\int_0^t f_{n,-1}(\widehat U_{n,1}(t))\,\ud s\right)~\right|.
\end{eqnarray*}
By Gronwall's inequality, we obtain, for $t\in[0,T]$,
\begin{equation*}
  \sqrt{\frac{n}{d(n)}}\,\left|~ U_{n,1}(t)-\widehat{U}_n(t)~\right|
  ~\leq~ \varepsilon_n(t) + e^t \int_0^t \varepsilon_n(t)\, \ud s 
  ~\leq~ L \cdot \sup_{t\in [0,T]} \varepsilon_n(t),
\end{equation*}
where $L=1+T e^T$.

We proceed by showing that $\varepsilon_n(t)\Rightarrow 0$.
From \eqref{eq:exact}, we find that
\begin{equation*}
  \sqrt{n\,d(n)}\,\,V_{n,2}\!\left(\frac{t}{d(n)}\right) 
  ~\leq~ \sqrt{n\,d(n)}\,V_{n,2}(0) + \sqrt{n\,d(n)}\:S_{n,2}\!\left(\frac{n}{d(n)^2} \int_0^t U_{n,1}(s) \,\ud s\right),
\end{equation*}
which converges to $0$ almost surely as $n\to\infty$ uniformly on compact sets, by \eqref{eq:diffusion_condition_2}, Lemma~\ref{lemma:integral}, Lemma~\ref{lemma:randomTime} with $\lim_{n\to\infty} n/d(n)^2=0$.
Also, from Lemma~\ref{lemma:fclt} and Lemma~\ref{lemma:randomTime}, we deduce that
\begin{eqnarray*}
  \left( \widetilde{S}_n\left( \int_0^t f_{n,-1}(U_{n,1}(s))\,\ud s \right),
    \widetilde{S}_n\left( \int_0^t f_{n,-1}( \widehat{U}_n(s))\,\ud s \right) \right)\\
  \Rightarrow
  \left(  B\left( \int_0^t \left[1-e^{-u_1(s)}\right] ds \right),
    B\left( \int_0^t \left[1-e^{-u_1(s)}\right] ds \right) \right),
\end{eqnarray*}
as $n\to\infty$, where $B$ is a standard Wiener process. 
By the continuous mapping theorem, we conclude that, as $n\to\infty$,
\begin{equation*}
  \varepsilon_n(t)~\Rightarrow~ 0,
\end{equation*}
and therefore 
\begin{equation*}
  \sqrt{\frac{n}{d(n)}}\left( U_{n,1}(t)-\widehat{U}_n(t)\right) ~\Rightarrow~ 0.
\end{equation*}
From \eqref{eq:second_tilde}, we conclude that, as $n\to\infty$,
\begin{equation*}
  \sqrt{\frac{n}{d(n)}} \left( U_{n,1}(t)-u_1(t) \right) ~\Rightarrow~ Z(t),
\end{equation*}
as claimed.
\end{proof}

\section*{Acknowledgments}
This research is supported in part by NSF grant CMMI-1252878.
We thank Ilyas Iyoob for fruitful discussions.

\appendix
\section{Appendix} 
\label{sec:appendix}
This appendix reviews elements of convergence theory of functions and stochastic processes. 

For fixed $T>0$, $D^k[0,T]$ is the space of functions from $[0,T]$ to $\Rea^k$ that are right-continuous with left-limits (RCLL) equipped with the norm
\begin{equation*}
  \norm{f}_T ~:=~ \sup_{0\leq t\leq T} \norm{f(t)}_\infty
\end{equation*}
and the associated topology of uniform convergence.
We define $D^k[0,\infty)$ similarly, and we equip it with the product metric (of convergence on compact sets) and its associated topology.

We interpret a stochastic process $X$ in this context as a measurable mapping from a probability space $(\Omega,\mathcal{F},\mathbb{P})$ to $D^k[0,\infty)$. For a sequence $\{X_n\}_{ n\in\Nat}$ 
of stochastic processes and a stochastic process $X$, we say that $\{X_n\}_{n\in\Nat}$ converges almost surely to $X$ 
uniformly on compact sets if 
\begin{equation*}
  \mathbb{P}\left( \lim_{n\to\infty} \norm{X_n-X}_T=0 \right) =1,
\end{equation*}
for all $T>0$. 

For a stochastic process $X$, we can define a probability measure $P_X$ on $D^k[0,T)$ for any $T>0$. We say that a sequence $\{X_n\}_{n\in\Nat}$ of stochastic processes converges in distribution to a stochastic process $X$ if, for all $T>0$,
\begin{equation*}
  \lim_{n\to\infty} \int_{D^k[0,T]} f \,\ud P_{X_n} ~=~ \int_{D^k[0,T]} f \,\ud P_X
\end{equation*}
for every bounded and continuous real-valued function $f$ on $D^k[0,T]$. We abbreviate this by
\begin{equation*}
  X_n ~\Rightarrow~ X,
\end{equation*}
as $n\to\infty$.

The following lemmas contain results about convergence of functions that are needed to prove our theorems. The first three lemmas are basic results about uniform convergence on compact sets. The proof of the third lemma can be found in \cite{Dai1995}.

\begin{lemma}\label{lemma:integral}
Let $\{f_n\}_{n\in\Nat}$ be a sequence of real-valued functions defined on $[0,\infty)$ and assume that it converges to a function  $f:[0,\infty)\to\Rea$ uniformly on compact sets. Assume that the functions $F_n:[0,\infty)\to\Rea$ with $F_n(t)=\int_0^t f_n(s)\,\ud s$ and 
$F:[0,\infty)\to\Rea$ with $F(t)=\int_0^t f(s)\,\ud s$ are well-defined. Then, as $n\to\infty$,
	$\{F_n\}_{n\in\Nat}$ converges to $F$ uniformly on compact sets.
\end{lemma}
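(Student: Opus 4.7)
The plan is to fix an arbitrary $T>0$ and bound $\sup_{t\in[0,T]}|F_n(t)-F(t)|$ directly by the uniform distance between $f_n$ and $f$ on $[0,T]$. Since $F_n(t)-F(t)=\int_0^t (f_n(s)-f(s))\,\ud s$, the triangle inequality for integrals gives, for every $t\in[0,T]$,
\begin{equation*}
  |F_n(t)-F(t)| \;\leq\; \int_0^t |f_n(s)-f(s)|\,\ud s \;\leq\; T \cdot \sup_{s\in[0,T]} |f_n(s)-f(s)|.
\end{equation*}

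Taking the supremum of the left-hand side over $t\in[0,T]$ preserves the bound, and the right-hand side tends to zero by the assumption that $f_n\to f$ uniformly on compact sets (applied to the compact set $[0,T]$). Since $T$ was arbitrary, this establishes uniform convergence of $F_n$ to $F$ on every compact subset of $[0,\infty)$.

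There is essentially no obstacle here; the result is a standard elementary consequence of the linearity and monotonicity of the Riemann integral together with the definition of uniform convergence on compact sets. The only point one might want to mention explicitly is the (trivial) justification that the integrals are well-defined on $[0,T]$ once we know $f_n$ and $f$ are integrable on $[0,t]$ for each $t$ in $[0,\infty)$, which is part of the hypothesis. No approximation argument, no Lipschitz estimate, and no interchange-of-limits theorem (such as dominated convergence) is needed, because the bound is uniform in $t$ on the compact interval.
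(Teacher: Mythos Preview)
Your argument is correct and is exactly the standard one-line estimate one would expect. The paper does not give a proof of this lemma at all; it simply lists it among ``basic results about uniform convergence on compact sets'' in the appendix, so there is nothing to compare against beyond noting that your proof fills in precisely the elementary details the authors chose to omit.
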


\begin{lemma}\label{lemma:composition_convergence}
  Let $\{f_n\}_{n\in\Nat}$ and $\{g_n\}_{n\in\Nat}$ be two sequences of real-valued functions defined on $[0,\infty)$. Assume that $g_n$ is nonnegative.
  If, as $n\to\infty$, $\{f_n\}_{n\in\Nat}$ and $\{g_n\}_{n\in\Nat}$ converges uniformly on compact sets to real-valued functions $f$ and $g$, respectively, and $f$ and $g$ are continuous, then, as $n\to\infty$, the sequence $\{f_n(g_n)\}_{n\in\Nat}$ converges to $f(g)$ uniformly on compact sets.
\end{lemma}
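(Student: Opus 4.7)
The plan is to reduce the compact-set uniform convergence of the composition to uniform convergence of $f_n$ on a single compact interval, together with uniform continuity of $f$ on that same interval. I would fix an arbitrary $T>0$ and aim to show that $\sup_{t\in[0,T]}|f_n(g_n(t))-f(g(t))|\to 0$.

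First I would bound the range of $g_n$ on $[0,T]$. Since $g$ is continuous on the compact interval $[0,T]$, it is bounded there, say $0\le g(t)\le M$. Since $g_n\to g$ uniformly on $[0,T]$, for all sufficiently large $n$ we have $\sup_{t\in[0,T]}|g_n(t)-g(t)|\le 1$, and because $g_n\ge 0$ this gives $g_n(t)\in[0,M+1]$ for all $t\in[0,T]$ and all such $n$. Now I would split
\begin{equation*}
|f_n(g_n(t))-f(g(t))|\;\le\;|f_n(g_n(t))-f(g_n(t))|+|f(g_n(t))-f(g(t))|.
\end{equation*}

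For the first term, since $g_n(t)\in[0,M+1]$ and $f_n\to f$ uniformly on the compact set $[0,M+1]$, we have $|f_n(g_n(t))-f(g_n(t))|\le\sup_{x\in[0,M+1]}|f_n(x)-f(x)|\to 0$, uniformly in $t\in[0,T]$. For the second term, $f$ is continuous on $[0,M+1]$ hence uniformly continuous there, so for any $\varepsilon>0$ there exists $\delta>0$ with $|f(x)-f(y)|<\varepsilon$ whenever $|x-y|<\delta$ and $x,y\in[0,M+1]$; the uniform convergence $g_n\to g$ on $[0,T]$ makes $|g_n(t)-g(t)|<\delta$ for all $t\in[0,T]$ once $n$ is large, yielding $|f(g_n(t))-f(g(t))|<\varepsilon$ uniformly in $t\in[0,T]$.

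Combining the two bounds gives $\sup_{t\in[0,T]}|f_n(g_n(t))-f(g(t))|\to 0$ as $n\to\infty$, which is the desired conclusion since $T$ was arbitrary. The only subtle point is ensuring $g_n$ stays inside a fixed compact interval for large $n$ so that the uniform convergence of $f_n$ and the uniform continuity of $f$ can both be invoked on the same set; this is secured by the continuity of $g$ (which bounds $g$ on $[0,T]$) together with the uniform convergence of $g_n$. There is no real obstacle here, the argument is a standard ``triangle inequality plus uniform continuity'' composition lemma.
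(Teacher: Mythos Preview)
Your proof is correct and follows essentially the same approach as the paper: both bound the range of $g_n$ on $[0,T]$ inside a fixed compact interval $[0,M+1]$ using continuity of $g$ and uniform convergence of $g_n$, then split $|f_n(g_n(t))-f(g(t))|$ via the same triangle inequality into a term controlled by uniform convergence of $f_n$ on $[0,M+1]$ and a term controlled by uniform continuity of $f$ there. The only cosmetic difference is that the paper carries out an explicit $\varepsilon$--$\delta$ argument with a single threshold $N$ chosen simultaneously for $f_n$ and $g_n$ on $[0,\max\{T,M+1\}]$, while you phrase the two terms separately; the substance is identical.
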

\begin{proof}
  Fix $T>0$ and $\varepsilon>0$. Since $g$ is continuous on $[0,T]$, there exists $M>0$ such that $|g(t)|\leq M$ for all $t\in[0,T]$. 
  Since $f$ is continuous on $[0,M+1]$, there exists $0<\delta<1$ such that, for $s,t\in [0,M+1]$, $|t-s|<\delta$ implies $|f(t)-f(s)|\leq \varepsilon/2$. 
  Let $L=\max\{T,M+1\}$. 
  
  From the fact that $f_n\to f$ and $g_n \to g$ as $n\to\infty$ uniformly on compact sets, there exists some $N\in\Nat$ such that $n\geq N$ implies $|f_n(t)-f(t)|\leq \min\{\varepsilon/2,\delta\}$ and $|g_n(t)-g(t)|\leq \min\{\varepsilon/2,\delta\}$ for all $t\in[0,L]$. Then, for all $t\in[0,T]$ and $n\geq N$, we have
  \begin{equation*}
    |g_n(t)|~\leq~ |g_n(t)-g(t)| +|g(t)|~\leq~ 1+M.
  \end{equation*}
  Thus, if $n\geq N$, we have
  \begin{equation*}
    |f_n(g_n(t))-f(g(t))| 
    ~\leq~ |f_n(g_n(t))-f(g_n(t))| + |f(g_n(t))-f(g(t))| ~<~ \varepsilon,
  \end{equation*}
  for all $t\in[0,T]$. Therefore, $f_n(g_n)$ converges to $f(g)$ as $n\to\infty$ uniformly on compact sets.
\end{proof}

\begin{lemma}\label{lemma:dai}
  Let $\{f_n\}_{n\in\Nat}$ be a sequence of nondecreasing real-valued functions on $[0,\infty)$ and let $f$ be a continuous function on $[0,\infty)$. Assume that $\lim_{n\to\infty} f_n(t)= f(t)$ for all rational numbers $t\in[0,\infty)$. Then $\{f_n\}_{n\in\Nat}$ converges to $f$, as $n\to\infty$, uniformly on compact sets.
\end{lemma}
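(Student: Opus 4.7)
The plan is a standard Pólya-type argument that leverages monotonicity of the $f_n$ together with uniform continuity of the continuous limit $f$. The guiding idea is that pointwise convergence on a dense set upgrades to uniform convergence on compact sets whenever the approximants are monotone and the limit is continuous; this is essentially the classical theorem on uniform convergence of monotone sequences with continuous limit.

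First, I would fix an arbitrary $T>0$ and $\varepsilon>0$ and exploit uniform continuity of $f$ on $[0,T]$: choose $\delta>0$ such that $|f(s)-f(t)|<\varepsilon$ whenever $s,t\in[0,T]$ and $|s-t|<\delta$. Then I would pick rational points $0=t_0<t_1<\cdots<t_m=T$ (or, to avoid $T$ itself being irrational, rationals $t_0<\cdots<t_m$ with $t_0=0$, $t_m>T$) satisfying $t_i-t_{i-1}<\delta$ for each $i$. Because there are only finitely many such $t_i$ and each is rational, the pointwise-convergence hypothesis gives an $N$ such that $|f_n(t_i)-f(t_i)|<\varepsilon$ for every $i$ and every $n\ge N$.

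Next I would use the monotonicity of $f_n$ to sandwich $f_n(t)$ at an arbitrary $t\in[0,T]$: picking the index $i$ with $t_{i-1}\le t\le t_i$, we have
\begin{equation*}
  f_n(t_{i-1})~\le~ f_n(t)~\le~ f_n(t_i).
\end{equation*}
Combining with $|f_n(t_j)-f(t_j)|<\varepsilon$ and $|f(t)-f(t_j)|<\varepsilon$ for $j\in\{i-1,i\}$ yields
\begin{equation*}
  f(t)-2\varepsilon~<~ f_n(t_{i-1})~\le~ f_n(t)~\le~ f_n(t_i)~<~ f(t)+2\varepsilon,
\end{equation*}
so $\sup_{t\in[0,T]}|f_n(t)-f(t)|\le 2\varepsilon$ for all $n\ge N$. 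Since $T$ and $\varepsilon$ were arbitrary, this gives uniform convergence on every compact set.

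I do not anticipate a real obstacle: the only subtlety is ensuring that the finite mesh of rationals actually covers $[0,T]$ (handled by taking $t_m$ to be a rational slightly larger than $T$ and using monotonicity together with the bound at $t_m$), and observing that $f$, being continuous on the compact interval $[0,T]$, is automatically uniformly continuous there. Everything else is a bookkeeping estimate with $\varepsilon$.
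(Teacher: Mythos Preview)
Your argument is correct: it is the standard P\'olya-type proof that monotonicity of the $f_n$ together with continuity of the limit $f$ upgrades pointwise convergence on a dense set to uniform convergence on compacts, and you handle the minor endpoint issue (irrational $T$) appropriately. The paper does not actually give its own proof of this lemma---it simply cites \cite{Dai1995}---so there is nothing to compare beyond noting that your self-contained argument is more informative for the reader than a bare reference.
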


The next lemmas are the functional law of large numbers and the functional central limit theorem for Poisson processes, see for instance \cite{Chen:2001wa}.

\begin{lemma}[Functional Law of Large Numbers] \label{lemma:flln}
Let $A$ be a Poisson process with rate $\lambda.$ Then, as $n\to\infty$, we have almost surely,
\begin{equation*}
  \frac{1}{n}\,A(n\,t) ~\to~ \lambda t,
\end{equation*}
uniformly on compact sets. Also, if $f(n)=o(n)$ and $\lim_{n\to\infty}f(n)=\infty$, we have almost surely,
\begin{equation*}
  \frac{1}{n}\,A\!\left(\frac{n}{f(n)}\,t\right) ~\to~ 0,
\end{equation*}
as $n\to\infty$, uniformly on compact sets.
\end{lemma}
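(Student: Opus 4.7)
\textbf{Proof proposal for Lemma~\ref{lemma:flln}.}

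The plan is to first establish the classical functional law of large numbers (part one) and then derive the refinement in part two by a simple algebraic manipulation.

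For part one, I would fix $T>0$ and reduce uniform convergence on $[0,T]$ to pointwise convergence on a countable dense set by invoking Lemma~\ref{lemma:dai}. Specifically, $t\mapsto \frac{1}{n}A(nt)$ is nondecreasing for each $n$ and the candidate limit $t\mapsto \lambda t$ is continuous, so it suffices to show pointwise almost sure convergence at each rational $t\in[0,\infty)$. For fixed $t$, write $A$ as the counting process of i.i.d.\ rate-$\lambda$ exponential interarrival times $\xi_1,\xi_2,\dots$; the Kolmogorov strong law of large numbers gives $\frac{1}{k}\sum_{i=1}^k \xi_i \to 1/\lambda$ almost surely, and inverting yields $A(s)/s \to \lambda$ almost surely as $s\to\infty$. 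Applying this along $s=nt$ gives $\frac{1}{n}A(nt)\to\lambda t$ almost surely. Taking a countable intersection over rational $t$ and invoking Lemma~\ref{lemma:dai} on the common probability-one event then yields uniform convergence on $[0,T]$ almost surely, and hence on all compact sets.

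For part two, set $m(n)=n/f(n)$. Because $f(n)=o(n)$ we have $m(n)\to\infty$, and because $f(n)\to\infty$ we have $1/f(n)\to 0$. The key identity is
\begin{equation*}
    \frac{1}{n}\,A\!\left(\frac{n}{f(n)}t\right) \;=\; \frac{1}{f(n)}\cdot\frac{1}{m(n)}\,A(m(n)\,t).
\end{equation*}
From part one, applied along the (possibly non-integer) diverging sequence $m(n)$, the second factor converges to $\lambda t$ uniformly on compact sets almost surely. Handling non-integer $m(n)$ is routine: sandwich $A(m(n)t)$ between $A(\lfloor m(n)\rfloor t)$ and $A(\lceil m(n)\rceil t)$ and apply part one along the integer subsequences, using monotonicity of $A$. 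Since the right-hand factor stays bounded on each compact set (in fact converges) and $1/f(n)\to 0$, the product tends to zero uniformly on compact sets almost surely.

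The only mild obstacle is the non-integrality of $m(n)$ in the application of part one, but the monotonicity sandwich just described dispatches it. No additional machinery beyond Lemma~\ref{lemma:dai} and the elementary SLLN for renewal/Poisson processes is needed.
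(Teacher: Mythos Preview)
Your argument is correct. Note, however, that the paper does not supply a proof of this lemma at all: it is stated in the appendix as a standard result with a reference to \cite{Chen:2001wa}, so there is no ``paper's own proof'' to compare against. Your route---pointwise SLLN for the Poisson process combined with the monotonicity-plus-continuous-limit upgrade of Lemma~\ref{lemma:dai} for part one, followed by the factorization $\tfrac{1}{n}A(\tfrac{n}{f(n)}t)=\tfrac{1}{f(n)}\cdot\tfrac{1}{m(n)}A(m(n)t)$ for part two---is clean and self-contained. The sandwich for non-integer $m(n)$ is in fact unnecessary: since $A(s)/s\to\lambda$ a.s.\ as $s\to\infty$ along the reals, part one (and its use in part two) holds verbatim for real $n\to\infty$, but including it does no harm.
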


\begin{lemma}[Functional Central Limit Theorem] \label{lemma:fclt}
  Let $A$ be a Poisson process with rate $1$. Then, as $n\to\infty$,
  \begin{equation*}
    \sqrt{n} \left( \frac{1}{n}\,A(n\,t)-t \right) ~\Rightarrow~ B(t),
  \end{equation*}
  where $B(t)$ is the standard Wiener process.
\end{lemma}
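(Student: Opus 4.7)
The plan is to set $X_n(t) := \sqrt{n}\bigl(\tfrac{1}{n}A(nt) - t\bigr) = (A(nt) - nt)/\sqrt{n}$ and show $X_n \Rightarrow B$ in $D[0,T]$ for every $T>0$. The argument follows the classical two-step recipe: first, convergence of finite-dimensional distributions to those of Brownian motion; second, tightness on $D[0,T]$. Since the limit is continuous, Skorokhod $J_1$-convergence automatically upgrades to convergence in the uniform norm $\norm{\cdot}_T$ used in the paper.

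For the finite-dimensional distributions, fix $0 = t_0 < t_1 < \cdots < t_k \leq T$. The increments $A(nt_i) - A(nt_{i-1})$ are independent Poisson random variables with means $n(t_i - t_{i-1})$. Writing each such increment as a sum of $n$ i.i.d.\ Poisson$(t_i - t_{i-1})$ variables and applying the classical Lindeberg--L\'evy CLT yields
\begin{equation*}
\frac{A(nt_i) - A(nt_{i-1}) - n(t_i - t_{i-1})}{\sqrt{n}} \;\Rightarrow\; N(0, t_i - t_{i-1}),
\end{equation*}
jointly and independently across $i$ by independence of the increments on disjoint intervals. This matches exactly the joint law of $(B(t_1) - B(t_0), \ldots, B(t_k) - B(t_{k-1}))$.

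For tightness, the cleanest route is the martingale FCLT. The compensated Poisson process $M(t) := A(t) - t$ is a right-continuous martingale with predictable quadratic variation $\langle M \rangle(t) = t$, so $X_n(t) = M(nt)/\sqrt{n}$ is a martingale with $\langle X_n \rangle(t) = t$ (deterministic) and jumps bounded deterministically by $1/\sqrt{n}$. Rebolledo's theorem (as presented, for example, in Ethier--Kurtz or Whitt) then gives $X_n \Rightarrow B$ directly, since both hypotheses---convergence of the predictable quadratic variation to the identity and vanishing of the maximum jump---are immediate. The main obstacle is precisely this tightness step; the finite-dimensional piece is routine. An alternative that avoids citing Rebolledo is to verify Aldous' criterion by hand, bounding $\Pr(|X_n(\tau_n + \delta) - X_n(\tau_n)| > \varepsilon)$ for stopping times $\tau_n \leq T$ via the strong Markov property of $A$ and the elementary variance estimate $\var[A(nt+nh) - A(nt)] = nh$; this is doable but longer, and the martingale approach is preferable.
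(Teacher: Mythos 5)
Your proposal is correct. Note, however, that the paper does not prove this lemma at all: it is stated in the appendix as a standard fact with a pointer to a textbook reference (\cite{Chen:2001wa}), so there is no in-paper argument to compare against line by line. Your two-step argument---convergence of finite-dimensional distributions by writing $A(nt_i)-A(nt_{i-1})$ as a sum of $n$ i.i.d.\ Poisson$(t_i-t_{i-1})$ variables and invoking the classical CLT, plus tightness via the martingale FCLT applied to $X_n(t)=M(nt)/\sqrt{n}$ with $\langle X_n\rangle(t)=t$ and jumps of size $1/\sqrt{n}$---is a standard and fully valid route, and it is genuinely different from the way references such as the one cited usually proceed (typically via Donsker's theorem for the i.i.d.\ exponential interarrival times together with an inverse/random-time-change argument for the counting process, or by decomposing a rate-$n$ Poisson process into $n$ i.i.d.\ rate-$1$ copies and applying a Donsker-type theorem). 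The martingale route buys brevity: both hypotheses of Rebolledo's theorem are immediate because the predictable quadratic variation is deterministic and equal to $t$ and the jump sizes are deterministically $1/\sqrt{n}$, whereas the renewal-theoretic route requires an extra inversion step. Your closing remark about topologies is also the right reconciliation with the paper's setup: since the Brownian limit has continuous paths, Skorokhod $J_1$ convergence yields convergence with respect to the uniform metric on compacts used in the appendix (the usual measurability caveat about the nonseparable uniform topology on $D$ is glossed over by the paper itself, so you need not do more).
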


The following lemma is often called the random time-change theorem, see for instance \cite{Chen:2001wa}.

\begin{lemma}[Random Time-Change Theorem]\label{lemma:randomTime}
  Let $\{f_n\}_{n\in\Nat}$ and $\{g_n\}_{n\in\Nat}$ be two sequences in $D^k[0,\infty)$. Assume that each component of $g_n$ is nondecreasing with $g_n(0)=0$. If as $n\to\infty$, $(f_n,g_n)$ converges uniformly on compact sets to $(f,g)$ and $f$ and $g$ are continuous, then
  \begin{equation*}
    \lim_{n\to\infty}f_n(g_n) ~\to~ f(g),
  \end{equation*}
  uniformly on compact sets, where the $i$th component of $f(g)$ is the composition of $i$th component of $f$ and $i$th component of $g$. 
\end{lemma}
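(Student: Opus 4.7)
My plan is to reduce to the scalar case ($k=1$), since convergence in the product topology on $D^k[0,\infty)$ is componentwise, and then prove the scalar statement by a standard localization-plus-uniform-continuity argument on each compact time interval $[0,T]$. Fix $T>0$ and $\varepsilon>0$. Because $g$ is continuous on $[0,T]$, its image lies in some compact interval $[-M,M]$, and by the uniform convergence $g_n\to g$ on $[0,T]$ the values of $g_n$ on $[0,T]$ eventually lie in $[-M-1,M+1]$. Thus only the restrictions of $f$ and $f_n$ to the fixed compact set $J=[-M-1,M+1]$ are relevant.

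Next I would exploit that $f$ is continuous, hence uniformly continuous, on $J$, to obtain $\delta>0$ such that $|s-t|<\delta$ implies $|f(s)-f(t)|<\varepsilon/2$. For $n$ large enough we have both $\norm{g_n-g}_T<\delta$ and $\sup_{s\in J}|f_n(s)-f(s)|<\varepsilon/2$, the latter from uniform-on-compacts convergence $f_n\to f$. Then the triangle inequality
\begin{equation*}
|f_n(g_n(t))-f(g(t))| \le |f_n(g_n(t))-f(g_n(t))| + |f(g_n(t))-f(g(t))| < \varepsilon
\end{equation*}
holds uniformly in $t\in[0,T]$, which is exactly the claim.

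The one point that at first looks most subtle is that the $f_n$ live in $D[0,\infty)$ and may have jumps inside $J$, so one cannot use uniform continuity of $f_n$ itself to handle the first term in the triangle inequality. This is not actually an obstacle, however, because uniform-on-compacts convergence of $f_n$ to the continuous limit $f$ already provides pointwise control on $J$ without any regularity of $f_n$. Notice that the nondecreasingness of the components of $g_n$ and the normalization $g_n(0)=0$ are never invoked in the argument; they appear in the statement only because the typical use of this lemma in the paper is to time-changes of Poisson processes where $g_n$ is an integrated intensity. No tightness or Skorohod-type compactness arguments are required; the whole proof reduces to a few lines of elementary real analysis once the localization in the first paragraph is in place.
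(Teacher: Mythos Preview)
Your argument is correct and complete as far as the analysis goes; it is in fact essentially the proof the paper gives for the closely related Lemma~\ref{lemma:composition_convergence}. The paper itself does not prove Lemma~\ref{lemma:randomTime}: it simply cites \cite{Chen:2001wa} and states the result. So there is no ``paper's own proof'' to compare against, and your elementary localization-plus-uniform-continuity argument is exactly the kind of justification one would supply.

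One small correction to your commentary: the hypotheses that each component of $g_n$ is nondecreasing with $g_n(0)=0$ are not entirely idle. Since the $f_n$ live in $D^k[0,\infty)$, they are only defined on $[0,\infty)$, so the composition $f_n(g_n(t))$ requires $g_n(t)\ge 0$. This is guaranteed precisely by $g_n(0)=0$ together with monotonicity. Accordingly, your localization interval should be $J=[0,M+1]$ rather than $[-M-1,M+1]$. Beyond this domain bookkeeping you are right that monotonicity plays no analytic role in the uniform-topology version; it becomes essential only in the Skorohod $J_1$ formulation of the random time-change theorem, which is the version treated in the cited reference.
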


The next lemma can be used to show the existence of a fluid limit of a sequence of stochastic processes. Intuitively, it entails that if the fluctuations of a sequence of functions are asymptotically bounded by the fluctuations of a Lipschitz function, then any subsequence has a convergent subsequence which converges to a Lipschitz function. This lemma immediately follows from arguments in Appendix A in \cite{Tsitsiklis:wz}.

\begin{lemma}\label{lemma:kuang}
	Fix $T>0.$ Let $\{f_n\}_{n\in\Nat}$ be a sequence in $D[0,T].$ Assume that $|f_n(0)|\leq M$ and 
	\begin{equation*}
		|f_n(a)-f_n(b)|~\leq~ L|a-b|+\gamma_n,\qquad\forall a,b\in[0,T],
	\end{equation*}
	for constants $M,L$ and a sequence $\gamma_n\downarrow 0.$ Then
	any subsequence of $\{f_n\}_{n\in\Nat}$ has a subsequence that converges to an $L$-Lipschitz function $f$ uniformly on $[0,T]$ with $|f(0)|\leq M.$
\end{lemma}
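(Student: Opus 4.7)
The plan is a diagonal-argument variant of Arzela-Ascoli that tolerates the $O(\gamma_n)$ slack in the Lipschitz bound. The hypotheses immediately yield uniform boundedness on $[0,T]$, since $|f_n(t)|\le|f_n(0)|+L\,t+\gamma_n\le M+LT+\sup_n\gamma_n<\infty$, so pointwise the sequence lives in a compact subset of $\Rea$ and admits convergent subsequences at every point.

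First I would extract (from the originally given subsequence, which I re-index as $f_n$) a further subsequence along which $f_n(q)$ converges for every rational $q\in[0,T]$; this is the standard Cantor diagonal construction, since the rationals in $[0,T]$ are countable. Calling the pointwise limit $f(q)$, the asymptotic Lipschitz inequality passes to the limit: for rationals $q_1,q_2$,
\begin{equation*}
  |f(q_1) - f(q_2)| ~=~ \lim_{n\to\infty} |f_n(q_1) - f_n(q_2)| ~\le~ L|q_1 - q_2|,
\end{equation*}
because $\gamma_n\to 0$. Thus $f$ is $L$-Lipschitz on the rationals and admits a unique $L$-Lipschitz extension to $[0,T]$, still denoted $f$, with $|f(0)|\le M$.

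Next I would upgrade rational pointwise convergence to uniform convergence on $[0,T]$. Fix $\varepsilon>0$, choose a finite grid $Q=\{q_0<\cdots<q_N\}$ of rationals in $[0,T]$ with $q_0=0$, $q_N=T$ and mesh smaller than $\varepsilon/(3L)$, and let $N_0$ be so large that $\gamma_n<\varepsilon/3$ and $\max_{q\in Q}|f_n(q)-f(q)|<\varepsilon/3$ for $n\ge N_0$. For arbitrary $t\in[0,T]$, pick a grid point $q\in Q$ within $\varepsilon/(3L)$ of $t$; then for $n\ge N_0$,
\begin{equation*}
  |f_n(t)-f(t)| ~\le~ |f_n(t)-f_n(q)|+|f_n(q)-f(q)|+|f(q)-f(t)| ~\le~ \bigl(L|t-q|+\gamma_n\bigr) + \varepsilon/3 + L|q-t| ~<~ \varepsilon,
\end{equation*}
giving $\sup_{t\in[0,T]}|f_n(t)-f(t)|\le\varepsilon$ for $n\ge N_0$. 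This delivers the advertised sub-subsequence.

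The only real subtlety, and the reason the lemma is not entirely trivial, is that the members $f_n\in D[0,T]$ may have jumps, so the classical Arzela-Ascoli theorem does not apply directly: equicontinuity fails for each individual $f_n$ and holds only asymptotically through the $\gamma_n$ correction (taking $a\to b$ in the hypothesis forces every jump of $f_n$ to be bounded in size by $\gamma_n$). That slack is harmless because it can be absorbed into the $\varepsilon$ budget once $n$ is large, and it is precisely this "asymptotic equicontinuity" that forces the limit to be continuous, in fact $L$-Lipschitz, despite the RCLL setting of the pre-limit objects.
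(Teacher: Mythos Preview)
Your argument is correct. The paper does not actually prove this lemma; it simply states that the result ``immediately follows from arguments in Appendix~A'' of \cite{Tsitsiklis:wz}, so there is no in-paper proof to compare against. Your diagonal-argument variant of Arzel\`a--Ascoli is the standard self-contained route and handles the $\gamma_n$ slack cleanly.

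Two cosmetic points. First, with mesh $<\varepsilon/(3L)$ together with $\gamma_n<\varepsilon/3$ and $\max_{q\in Q}|f_n(q)-f(q)|<\varepsilon/3$, your final chain of inequalities yields a bound of $4\varepsilon/3$ rather than $\varepsilon$, since the term $L|t-q|$ appears twice; this is of course harmless because $\varepsilon$ is arbitrary (or just take mesh $<\varepsilon/(4L)$ and $\varepsilon/4$ throughout). Second, $q_N=T$ need not be rational, but your argument only requires that every $t\in[0,T]$ lie within the chosen mesh distance of some grid point, which is easily arranged with rational $q_N$ close to $T$.
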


The next lemma is used to prove Theorem~\ref{thm:mainDiffusion}. Kurtz~\cite{kurtz:1978kq}
derives diffusion approximations for variety of continuous Markov chains and the following lemma is a special case. We use it to obtain the diffusion limit of $\{\widehat{U}_n(t)\}_{n\in\mathbb N}$ in the proof of Theorem~\ref{thm:mainDiffusion}.
\begin{lemma}\label{lemma:kurtz}
  Consider a sequence of real-valued Markov processes $\{U_N(t)\}_{N\in\Nat}$ which satisfies
  \begin{equation*}
    U_N(t)~=~ u_0 + \frac{1}{N}\,A_N\!\left( N \int_0^t f_{N,1}(U_N(s)) \ud s \right)
    -\frac{1}{N}\,S_N\!\left(N\int_0^t f_{N,-1}(U_N(s))\ud s\right),
  \end{equation*}
  where $A_N(\,\cdot\,)$ and $S_N(\,\cdot\,)$ are independent Poisson processes with rate $1$, and $f_{N,i}$ are positive valued continuous functions for $i=\pm 1$. Suppose that there exist a constant $M>0$ and functions $f_1$ and $f_{-1}$ such that 
  \begin{equation*}
    f_{N,i}(x) \leq M, \quad
    |f_{N,i}(x)-f_i(x)|\leq \frac{M}{N},\quad\textrm{and}\quad
    |\sqrt{f_{i}(x)}-\sqrt{f_{i}(y)}|^2\leq M|x-y|^2  
  \end{equation*}
  for $i=\pm 1$. Let $F(x)=f_1(x)-f_{-1}(x)$ and also assume that 
    $\left|F^\prime(x)\right|\leq M$, and $\left|F^{\prime\prime}(x)\right|\leq M$.
  Then we have
  \begin{equation*}
    \sqrt{N}\left(U_N(t)-u(t)\right)~\Rightarrow~ V(t),
  \end{equation*}
  where $u(t)$ is a function satisfying 
  \begin{equation*}
    u(t)=u_0+\int_0^t f_{1}(u(s))\ud s-\int_0^t f_{-1}(u(s))\ud s
  \end{equation*} and $V(t)$ is a stochastic process given by
  \begin{equation*}
    V(t)~=~\int_0^t \sqrt{f_1(u(s))} \ud B^{(1)}(s)-\int_0^t \sqrt{f_{-1}(u(s))} \ud B^{(2)}(s)
    + \int_0^t F^\prime(u(s)) V(s) \ud s,
  \end{equation*}
  where $B^{(1)}(t)$ and $B^{(2)}(t)$ are independent Wiener processes.
\end{lemma}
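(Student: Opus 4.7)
The plan is to follow the classical density-dependent Markov chain approach due to Kurtz. I would first establish the fluid limit $U_N \to u$ uniformly on compacts almost surely, then analyze the centered and rescaled process $V_N(t):=\sqrt{N}(U_N(t)-u(t))$ and show $V_N\Rightarrow V$. Writing $\tilde A_N(t):=A_N(Nt)/N-t$ (and $\tilde S_N$ analogously), which by Lemma~\ref{lemma:flln} converge to zero almost surely uniformly on compacts, the defining equation rearranges to
\begin{equation*}
U_N(t)-u(t)=\tilde A_N\!\!\left(\int_0^t\!f_{N,1}(U_N(s))\,\ud s\right)-\tilde S_N\!\!\left(\int_0^t\!f_{N,-1}(U_N(s))\,\ud s\right)+\int_0^t\!\bigl[F(U_N(s))-F(u(s))\bigr]\ud s+R_N(t),
\end{equation*}
where $R_N$ absorbs the $O(1/N)$ errors from $f_{N,i}-f_i$. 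Because $|F'|\leq M$ implies $F$ is Lipschitz, Gronwall's inequality combined with the almost sure vanishing of the Poisson noise terms yields the fluid limit.

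For the diffusion limit, rescale by $\sqrt{N}$: with $\tilde A_N^\ast(t):=\sqrt{N}(A_N(Nt)/N-t)$ and $\tilde S_N^\ast$ analogously,
\begin{equation*}
V_N(t)=\tilde A_N^\ast\!\!\left(\int_0^t\!f_{N,1}(U_N(s))\,\ud s\right)-\tilde S_N^\ast\!\!\left(\int_0^t\!f_{N,-1}(U_N(s))\,\ud s\right)+\sqrt{N}\!\int_0^t\!\bigl[F(U_N(s))-F(u(s))\bigr]\ud s+O\!\bigl(1/\sqrt{N}\bigr),
\end{equation*}
the last remainder using the hypothesis $|f_{N,i}-f_i|\leq M/N$. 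Lemma~\ref{lemma:fclt} yields $(\tilde A_N^\ast,\tilde S_N^\ast)\Rightarrow (B^{(1)},B^{(2)})$ with independent Wiener processes; combined with the fluid limit and the random time-change theorem (Lemma~\ref{lemma:randomTime}), the composed processes converge to $B^{(i)}(\int_0^\cdot f_i(u(s))\ud s)$, which by the time-change representation for Brownian motion is equal in distribution to the Ito integrals $\int_0^\cdot\sqrt{f_i(u(s))}\,\ud B^{(i)}(s)$. For the drift term I would use a second-order Taylor expansion
\begin{equation*}
F(U_N(s))-F(u(s))=F'(u(s))(U_N(s)-u(s))+\tfrac12 F''(\xi_N(s))(U_N(s)-u(s))^2,
\end{equation*}
so after multiplying by $\sqrt{N}$ the drift equals $\int_0^t F'(u(s))V_N(s)\,\ud s+\tfrac12\int_0^t F''(\xi_N(s))V_N(s)^2/\sqrt{N}\,\ud s$.

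The main obstacle is closing the loop for $V_N$: the drift already contains $V_N$ itself, and the quadratic Taylor remainder $V_N^2/\sqrt{N}$ is only negligible once tightness of $V_N$ is established. My plan is to first control the martingale part of $V_N$ via Doob's maximal inequality together with the uniform bound $f_{N,i}\leq M$, and then apply a Gronwall-type estimate (using $|F'|\leq M$) to obtain a uniform bound of the form $\mathbb{E}\sup_{s\leq t}V_N(s)^2\leq C_T$; this yields tightness of $V_N$ in $D[0,T]$ and renders the quadratic remainder $o_P(1)$ uniformly on compacts. With tightness in hand, the continuous mapping theorem and the identification of the driving Brownian motions via Lemma~\ref{lemma:randomTime} show that every subsequential weak limit satisfies the linear Ito equation
\begin{equation*}
V(t)=\int_0^t\!\sqrt{f_1(u(s))}\,\ud B^{(1)}(s)-\int_0^t\!\sqrt{f_{-1}(u(s))}\,\ud B^{(2)}(s)+\int_0^t\!F'(u(s))V(s)\ud s,
\end{equation*}
whose solution is unique since $F'$ is bounded, thereby pinning down the weak limit and completing the proof.
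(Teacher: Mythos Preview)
The paper does not supply its own proof of this lemma: it is stated in the appendix as a special case of Kurtz~\cite{kurtz:1978kq} and simply cited, with no argument given. So there is no ``paper's own proof'' to compare your proposal against.

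That said, your sketch is a faithful outline of Kurtz's density-dependent approach. The decomposition into centered Poisson fluctuations plus drift, the fluid limit via Gronwall, the FCLT for the centered Poisson terms combined with the random time-change to obtain the limiting Brownian integrals, the Taylor expansion of $F$ to linearize the drift, and the tightness-plus-uniqueness identification of the limit are exactly the ingredients in \cite{kurtz:1978kq}. One point worth tightening: Kurtz in fact proves a stronger strong-approximation statement (as the paper notes after invoking the lemma), using the Koml\'os--Major--Tusn\'ady embedding of the Poisson processes into Brownian motions on the same probability space, which sidesteps the need for a separate tightness argument and handles the joint convergence of the martingale and drift parts cleanly. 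Your weak-convergence route is also viable, but the step ``Doob plus Gronwall gives $\mathbb{E}\sup_{s\le t}V_N(s)^2\le C_T$'' needs care, since the Gronwall bound you would write for $V_N$ already contains the quadratic remainder $V_N^2/\sqrt{N}$ you are trying to control; a cleaner way is to bound $V_N$ by the martingale supremum times $e^{Mt}$ directly from the linear part and then control the quadratic term a posteriori, or to invoke the KMT coupling as Kurtz does.
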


\bibliographystyle{acmtrans-ims}	
\bibliography{randomized_LQF}

\end{document}